\newcommand{\nm}[2]{\Vert #1\Vert _{#2}}
\newcommand{\ep}{\varepsilon}
\newcommand{\fy}{\varphi}
\newcommand{\cdo}{\, \cdot \, }
\newcommand{\supp}{\operatorname{supp}}
\newcommand{\eabs}[1]{\langle #1\rangle}
\newcommand{\vrum}{\vspace{0.1cm}}
\def\cN{\mathcal{N}}
\def\R{\mathbb{R}}
\def\N{\mathbb{N}}
\def\sch{\mathcal{S}}
\def\la{\langle}
\def\ra{\rangle}
\def\Ren{\mathbb{R}^d}
\def\Renn{\mathbb{R}^{2d}}
\def\cD{\mathcal{D}}
\def\cS{\mathcal{S}}
\def\o{\omega}
\def\cM{\mathcal{M}}
\def\inv{^{-1}}
\def\Mmpq{M_m^{p,q}}
\def\phas{(x,\omega )}
\def\a{\alpha}                    
\newcommand{\stft}{short-time Fourier transform}
\def\Fur{\mathcal{F}}
\def\be{\begin{equation}}
\def\ee{\end{equation}}
\def\bena{\begin{eqnarray*}}
\def\ena{\end{eqnarray*}}
 \def\mN{\mathbb{N}}  \def\mC{\mathbb{C}}
\def\phas{(x,\omega )}
\newcommand{\modsp}{modulation space}
\def\cD{\mathcal{D}}
\def\cS{\mathcal{S}}
\DeclareMathOperator{\WF}{WF}
\def\Mopq{M_{(\omega)} ^{p,q}}
\newcommand{\MRs}{\mathcal M_{\{s\}}}
\newcommand{\rr}[1]{\mathbf R^{#1}}
\newcommand{\sets}[2]{\{ {\,}#1{\,};{\,}#2{\,}\} }
\theoremstyle{plain}
 \newtheorem{theorem}{Theorem}[section]
 \newtheorem{proposition}{Proposition}[section]
 \newtheorem{lemma}{Lemma}[section]
\theoremstyle{definition}
 \newtheorem{definition}{Definition}[section]
\theoremstyle{remark}
 \newtheorem{remark}{Remark}[section]
 \numberwithin{equation}{section}
\begin{document}

\title[]
{Wave-front sets in non-quasianalytic setting for Fourier Lebesgue and modulation spaces}

\author[N. Teofanov]{Nenad Teofanov}
\address{N. Teofanov\\ University of Novi Sad, Faculty of Sciences\\ Department
 of Mathematics and Informatics\\ Trg Dositeja Obradovi\' ca 4\\ 21000 Novi Sad \\ Serbia }
\email{nenad.teofanov@dmi.uns.ac.rs}

\subjclass[2010]{Primary 35A18,35S30,42B05,35H10}

\keywords{Wave-front sets, weighted Fourier-Lebesgue spaces, Gelfand-Shilov spaces, ultradistributions}

\begin{abstract}
We define and study wave-front sets for weighted Fourier-Lebesgue spaces when the weights are moderate with respect to the associated functions
for general sequences $\{ M_p\} $ which satisfy Komatsu's conditions $(M.1) - (M.3)'$.
In particular, when  $\{ M_p\} $ is the Gevrey sequence ($M_p = p!^s$, $s>1$) we recover some previously observed results.
Furthermore, we consider  wave-front sets for modulation spaces in the same setting, and prove the invariance property related to  the Fourier-Lebesgue type  wave-front sets.
\end{abstract}

\maketitle

\section{Introduction}

Wave-front sets in the context of  Fourier-Lebesgue spaces, together with the study of corresponding pseudodifferential operatros,  were first considered in \cite{PTT3}, see also \cite{PTT, PTT2,PTT4}.
They are recently used in \cite{CGN} for a mathematical explanation of phenomena related to the interferences in the Born-Jordan distribution.
The conic neighborhoods in the definition of such wave-front sets are replaced in \cite{GM}
by a filter of neighborhoods for the study of propagation of singularities of Fourier-Lebesgue type for partial (pseudo)differential equations, whose symbol satisfies generalized elliptic properties.
An important extension  of investigations from \cite{PTT, PTT2} to general  weighted Fourier Banach spaces is given in \cite{CJT1, CJT2}.

The above mentioned results are performed in the framework of weights of polynomial growth and, consequently, within the realm of tempered distributions. Spaces of ultradistributions in the context of weighted Fourier-Lebesgue type spaces 
were first observed in \cite{JPTT2}, see also \cite{JPTT3}. The sequences of the form $ M_p = p!^s,$ $s>1$, are used there to define the corresponding test function spaces. This in turn
leads to the analysis of weighted  Fourier-Lebesgue spaces such that the growth of the weight function at infinity is bounded by $e^{k|\cdot|^{1/s}},$ for some $k>0.$

In this paper we extend the results from \cite{JPTT2} to a more general context when the spaces of test functions are given by the means of 
$\{ M_p\} $ sequences which satisfy Komatsu's conditions $(M.1) - (M.3)'$, see Section \ref{sec1}. Note that this allows "fine tuning"
between the two Gevrey type sequences, see Remark \ref{fino tuniranje}.

The paper is organized as follows. We end the introduction with the basic notation, and a brief account on weight functions. Section\ref{sec1}
contains a discussion on sequences and corresponding associate functions, which are the basic notions in our analysis. We proceed with an exposition of Gelfand-Shilov spaces and other test function spaces, and their dual spaces of ultradistributions. Section \ref{sec2} contains the definition of  wave-front sets for weighted Fourier-Lebesgue spaces when the weights are submultiplicative with respect to the associated function of a given non-quasianalytic sequence $\{ M_p\}.$ We study its basic properties, convolution relations, and discuss its relation to some other types of  wave-front sets. In Section \ref{sec3} we first study the short-time Fourier transform in the context of test function spaces and their duals from Section \ref{sec1}, and then define modulation spaces and recall their basic properties. Finally, in Section \ref{sec4} we introduce wave-front sets for modulation spaces and show that they coincide with appropriate wave-front sets from Section \ref{sec2}. Since we consider general  non-quasianalytic sequences $\{ M_p\}$, we recover the main results from \cite{JPTT2, JPTT3} where the particular case $ M_p = p!^s $, $s>1$, is observed.

\subsection{Basic notation} \label{notions}
We put $\mathbb{N} =\{0,1,2,\dots \}$, $\eabs x =(1+|x|^2)^{1/2}$,
 $x\in \mathbb{R}^d$,
$xy=x\cdot y$ denotes  the scalar product on $\mathbb{R}^d$ and
$$
 \la \phas \ra^s= \la z\ra^s=(1+x^2+\o^2)^{s/2},\quad
   z=(x,\o)\in\Renn, \,\quad s\in\mathbb{R}.
$$  
The partial derivative
of a vector $x=(x_1,\dots,x_d)\in \mathbb{R}^d$ with respect to
$x_j$ is denoted by  $\partial _j = \frac{\partial}{\partial x_j}$.
Given a multi-index
$p=(p_1,\dots,p_d)\geq 0$, i.e., $p \in\mathbb{N}^d_0$ and $p_j \geq 0$, we write
$\partial^p=\partial^{p_1}_1\cdots\partial^{p_d}_d$ and  $x^p =(x_1,\dots,x_d)^{(p_1,\dots,p_d)}=\prod_{i=1}^d x_i^{p_i}$.
Similarly, $h\cdot |x|^{1/\a}=\sum_{i=1}^d h_i |x_i|^{1/\a_i}$. Moreover, for $p \in\mathbb{N}^d_0$ and $\a \in\R^d_+$, we set $(p!)^\a=(p_1!)^{\a_1}\dots (p_d!)^{\a_d}$.
In the sequel, a real number $r\in\R_+$ may play the role of the vector
with constant components $r_j=r$, so for $\a\in\R^d_+$, by writing $\a>r$ we mean $\a_j>r$ for all $j=1,\dots,d$.
By $X$ we denote an open set in $ \mathbb{R}^d,$ and $ K \Subset X $ means that $K$ is  compact subset in
$X.$

The Fourier transform is normalized to be $${\hat   {f}}(\o)=\Fur f(\o)=\int f(t)e^{-2\pi i t\o}dt.$$
We use the brackets $\la f,g\ra$ to denote the extension of the inner product $\la f,g\ra=\int f(t){\overline
{g(t)}}dt$ on $L^2(\Ren)$ to the dual pairing between a test function space $ \mathcal A $ and its dual $ {\mathcal A}' $:
$ \langle \cdot, \cdot \rangle  = $ $ _{{\mathcal A}'}\langle \cdot, \overline{\cdot} \rangle _{\mathcal A}.$
We use the standard notation for usual spaces of functions and distributions, e.g. $L^p (\mathbb{R}^d)$, $L^p _{loc }(\Omega)$,
$ 1 \leq p \leq \infty $, denote Lebesgue spaces and their local versions respectively,
$\mathcal{S} (\mathbb{R}^d)$ denotes the Schwartz space of rapidly decreasing test functions, etc.

\par

Translation and modulation operators, $T$ and $M$ respectively, when acting on $ f \in L^2 (\mathbb{R}^d)$
are defined by
\begin{equation} \label{trans-mod}
T_x f(\cdot) = f(\cdot - x) \;\;\; \mbox{ and } \;\;\;
 M_x f(\cdot) = e^{2\pi i x \cdot} f(\cdot), \;\;\; x \in \mathbb{R}^d.
\end{equation}
Then for $f,g \in L^2 (\mathbb{R}^d)$ the following relations hold:
$$
M_y T_x  = e^{2\pi i x \cdot y } T_x M_y, \;\;
 (T_x f)\hat{} = M_{-x} \hat f, \;\;
  (M_x f)\hat{} = T_{x} \hat f, \;\;\;
  x,y \in \mathbb{R}^d.
$$
These operators are extended to other spaces of functions and distributions in a natural way.

Throughout the paper, $A\lesssim B$
denotes $A\leq c B$ for a suitable constant $c>0$, whereas $A
\asymp B$ means that $c\inv A \leq B \leq c A$ for some $c\geq 1$. The
symbol $B_1 \hookrightarrow B_2$ denotes the continuous and dense embedding of
the topological vector space $B_1$ into $B_2$.

\subsection{Weights}

\par

In general, a weight function is a non-negative function in $L^\infty _{loc}$.

\par

\begin{definition}
Let $\omega, v$ be non-negative functions. Then 
\begin{enumerate}
\item $v$ is called submultiplicative if
\[
v(x+y)\leq v(x)v(y), \qquad \forall \ x,y\in\mathbb{R}^d;
\]
\item $\omega$ is called $v$-moderate if 
\[
\omega(x+y) \lesssim v(x)\omega (y), \qquad \forall \ x,y\in\mathbb{R}^d.
\]
\end{enumerate}
For a given submultiplicative weight $v$  the set of all
$v$-moderate weights will be denoted by $\mathcal{M}_{v}$.
\end{definition}

\par

If $v$ is even and $\omega \in \mathcal{M}_{v} $, then $1/v \lesssim \omega \lesssim v$,  $\omega \neq 0$
everywhere and $1/\omega \in \mathcal{M}_{v}$.

\par

In the sequel we assume that $v$ is an even submultiplicative function.
Submultiplicativity implies that $v$ is dominated by an exponential function, i.e.
\begin{equation*} 
v \leq C e^{k |\cdo|} \quad \mbox{for some}\quad C, k>0.
\end{equation*}

For example, every weight of the form
$$ 
v(z) =   e^{s\|z\|^b} (1+\|z\|)^a \log ^r(e+\|z\|)
$$ 
for parameters $a,r,s\geq 0$, $0\leq b \leq 1$ satisfies the above conditions.

\par

%
%

Let $s>1$.
By $\MRs (\mathbb{R}^d)$ we denote the set of  all weights which are moderate
with respect to a weight $v$ which satisfies $ v \leq Ce^{k|\cdo|^{1/s}}$
for some positive constants $C$ and $k$.
The weight $v$ satisfy the Beurling-Domar non-quasi-analyticity condition which takes the form
\[
\sum\limits_{n=0 } ^{\infty}\frac{\log v(nx)}{n^2} < \infty, \quad  x \in \mathbb{R}^d.
\]
We refer to \cite{Gro2} for a detailed account on weights in time-frequency analysis.

\par

\section{Spaces of test functions and their duals} \label{sec1}

Let $ (M_p)_{p \in \mN_0} $ be a sequence of positive numbers monotonically
increasing to infinity which satisfies:

\noindent $(M.1)\;\;$ $ M_p ^2 \leq M_{p-1} M_{p+1}, \;\;\; p \in
\mN; $

\noindent $(M.2) \;\;$ There exist positive constants $ A,H $ such
that
$$
 M_{p} \leq A H^p \mbox{ min }_{0 \leq q \leq p} M_{p-q} M_{q},
\;\; p,q \in \mN_0,
$$
or, equivalently, there exist positive constants $ A,H $ such that
$$
 M_{p+q} \leq A H^{p+q} M_{p} M_{q},
\;\; p,q \in \mN_0;
$$
\noindent $(M.3)' \;\;$ $ \sum_{p=1} ^{\infty} M_{p-1}/M_p  < \infty.$

We assume that $ M_0 = 1, $ and that $ M_p ^{1/p } $ is bounded below by
a positive constant.

The condition $(M.3)' $ provides the existence of nontrivial compactly supported smooth functions (and therefore partitions of unity)
in the corresponding spaces of test functions. It is therefore known as the  non-quasianalyticity condition.

The  Gevrey sequences $ M_p = p!^{s},$ $ p \in \mN,$ $ s > 1 $, are  basic examples of sequences which satisfy $(M.1) - (M.3)' $.

Let $ (M_p)_{p \in \mN_0} $ and $ (N_q)_{q \in \mN_0} $ be sequences
which satisfy $ (M.1). $ We write $ M_p \subset N_q $ ($ (M_p) \prec
(N_q), $ respectively) if there are constants $ H,C > 0 $ (for any
$H>0$ there is a constant $ C>0,$ respectively) such that $ M_p \leq
C H^p N_p, $ $ p \in \mN_0.$ Also, $ (M_p)_{p \in \mN_0} $ and $
(N_q)_{q \in \mN_0} $ are said to be equivalent if $ M_p \subset N_q
$ and $ N_q \subset M_p $ hold.

\begin{remark} \label{fino tuniranje}
The conditions (M.1) and (M.2) can be described  as follows. Let
$(s_p)_{p \in \mN_0}$ be a sequence of positive numbers monotonically increasing to infinity
($ s_p \nearrow \infty $) so that for every $p, q
\in \mathbb{N}_0$ there exist $A, H>0$ such that
\begin{equation} \label{(M.2)}
\prod_{j=1} ^{q} s_{p+j} =  s_{p+1} \cdots s_{p+q}\leq AH^{p} s_{1} \cdots s_{q} =  AH^{p} \prod_{j=1} ^{q} s_{j}.
\end{equation}
Then  the sequence $ (S_p)_{p \in \mN_0} $ given by $S_p =  \prod_{j=1} ^{p} s_j$, $S_0=1,$
satisfies  $(M.1)$ and $(M.2) $.

Conversely, if $ (S_p)_{p \in \mN_0} $ given by $S_p =  \prod_{j=1} ^{p} s_j$, $s_j > 0, $ $ j \in \mathbb{N},$ $S_0=1,$
satisfies $(M.1)$ then the sequence $ (s_p)_{p \in \mN_0} $ increases to
infinity. If, in addition, it satisfies $(M.2)$ then (\ref{(M.2)}) holds.

\par

Furthermore, if $ ( M_p )_{p \in \mN_0} $  and $ ( N_q )_{q \in \mN_0} $
are given by
\begin{equation} \label{big-seq-cond}
M_p := p!^{\frac{1}{2}} \prod_{k =0} ^p l_k =  p!^{\frac{1}{2}}
L_{p}, \;\;\; p \in \mN_0,\;\;\; N_q := q!^{\frac{1}{2}} \prod_{k
=0} ^q r_k =  q!^{\frac{1}{2}} R_q, \;\;\; q \in \mN_0
\end{equation}
where  $(r_p)_{p \in \mN_0}  $ and $(l_p)_{p \in \mN_0} $
are sequences of positive numbers monotonically increasing to infinity
such that \eqref{(M.2)} holds with the letter $s$ replaced by $r$ and $l$ respectively,
and which satisfy: For
every $\alpha\in(0,1]$ and every $k>1$ so that $kp\in\mathbb{N},
p\in \mathbb{N},$
\begin{equation} \label{seq-cond}
\max\{(\frac{r_{kp}}{r_p})^2,(\frac{l_{kp}}{l_p})^2 \} \leq
k^\alpha, \;\;\; p\in\mathbb{N}.
\end{equation}
Then $ p! \prec M_p N_p$ and the sequences
 $ (R_p)_{p \in \mN_0} $ and $ (L_p)_{p \in \mN_0} $ ($R_p=r_1
\cdots r_p$,  $L_p=l_1  \cdots l_p, p\in \mathbb{N}$ $R_0=1,$ and $L_0=1$)
satisfy  $(M.1)$ and $(M.2) $. Moreover,
$$\max\{R_p,L_p\}\leq p!^{\alpha/2}, p\in\mathbb{N},$$
for every $\alpha\in (0,1]$.
(For $ p,q,k \in \N_{0} ^d $ we have
$ L_{|p|} =  \prod_{|k| \leq |p|} l_{|k|},$ and $ R_{|q|} =
\prod_{|k| \leq |q|} r_{|q|}.$)
Such sequences are used in the study of localization operators in the context of quasianalytic spaces in \cite{CPRT2}.

\end{remark}

The {\em associated function} for a given sequence  $ (M_p) $ is defined by
\begin{equation} \label{asocirana}
M(\rho) = \sup_{p\in \mN} \ln_+ \frac{\rho^p M_0}{M_p}, \;\;\; 0 <
\rho < \infty,
\end{equation}
where $ \ln_+ t := \max \{ \ln t ,0 \}, $ $ t>0.$ It is a non-negative monotonically increasing function which vanishes for sufficiently small $\rho,$ and tends to infinity faster than $ \ln \rho ^p,$ as $\rho \rightarrow \infty.$ Moreover, if $(M_p) $ satisfies  $(M.1)$ and $ (M.3)' $, then $k^p p! / M_p \rightarrow 0$ as $p \rightarrow \infty.$

For example, the associated function for the Gevrey sequence  $ M_p = p!^{s}, $ $ p\in \mN_{0}$, $ s > 1 $, behaves
at infinity  as $ |\cdot|^{1/s},$ cf. \cite{Pe}. In fact, the interplay between the defining sequence and its associated function
plays an important role in the theory of ultradistributions.

The following result will be intensively used in this paper. We refer to \cite{CKP} for its proof.

\begin{lemma} \label{osobineasocirane}
Let there be given sequence $ (M_p) $ which satisfies  $(M.1)$. Then
\begin{equation} \label{konveksiti}
M(\sum_{k=1} ^n \rho_k) \leq \sum_{k=1} ^n M(\rho_k), \;\;\; \rho_k>0, \; k = 1,\dots,n.
\end{equation}
If, in addition, $ (M_p) $ satisfies  $(M.2)$, then
\begin{equation} \label{svojstvoasocirane1}
2 M( \rho) \leq  M(H \rho) + \ln_+ (A) , \;\;\; \rho>0,
\end{equation}
where $A$ and $H$ are the constants in  $(M.2)$> Furthermore, if $L\geq 1,$ then there is a constant $C>0$ such that
\begin{equation} \label{svojstvoasocirane2}
 M( L \rho) \leq \frac{3}{2} L M( \rho) + C, \;\;\; \rho>0,
\end{equation}
and there is a constant $B>0$ and a constant $K_L >0$ which depends on $L$, such that
\begin{equation} \label{svojstvoasocirane3}
LM( \rho) \leq  M( B^{L-1} \rho) + K_L, \;\;\; \rho>0.
\end{equation}
\end{lemma}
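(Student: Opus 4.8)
The plan is to work throughout with the exponentiated form of the associated function. Since $M_0=1$ and the index $p=0$ contributes the value $1$, one has $e^{M(\rho)}=\sup_{p\ge 0}\rho^p/M_p$, so that $M(e^t)=\sup_p\bigl(pt-\log M_p\bigr)$ is the Legendre--Fenchel conjugate of the convex function $p\mapsto\log M_p$ (the convexity being exactly $(M.1)$). This identity turns products of the $M_p$ into sums of indices and is the engine behind every part. I would record at the outset the two elementary consequences of $(M.1)$: logarithmic convexity gives the superadditivity $M_jM_{p-j}\le M_p$ (equivalently, $\log M_p$ is superadditive with $\log M_0=0$), while the conjugate $t\mapsto M(e^t)$ is convex, nondecreasing, vanishes for small $\rho$, and is superlinear because $M_p^{1/p}\to\infty$ (as follows from the stated fact that $k^p p!/M_p\to 0$).

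For the subadditivity estimate \eqref{konveksiti} I would reduce to $n=2$ by an immediate induction and then prove $M(\rho_1+\rho_2)\le M(\rho_1)+M(\rho_2)$. The conceptual reason is the \emph{shape} of $M$ as a function of $\rho$: under non-quasianalyticity the associated function grows sublinearly (for $M_p=p!^{s}$ it behaves like $|\cdot|^{1/s}$), hence $M$ is concave for large $\rho$, and a nonnegative concave function is subadditive. Concretely I would compare $e^{M(\rho_1+\rho_2)}=\sup_p(\rho_1+\rho_2)^p/M_p$ with $e^{M(\rho_1)}e^{M(\rho_2)}=\sup_{j,k}\rho_1^{\,j}\rho_2^{\,k}/(M_jM_k)$, expand $(\rho_1+\rho_2)^p$ by the binomial theorem, group the terms $\rho_1^{\,j}\rho_2^{\,p-j}$, and insert $M_jM_{p-j}\le M_p$. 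Controlling the binomial weights is the delicate point, and I expect the growth of the $M_p$ to be exactly what absorbs them.

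Property \eqref{svojstvoasocirane1} is the cleanest, and I would prove it first, directly from the second form of $(M.2)$. Writing $e^{2M(\rho)}=\bigl(\sup_p\rho^p/M_p\bigr)^2=\sup_{p,q}\rho^{p+q}/(M_pM_q)$ and inserting $M_pM_q\ge A^{-1}H^{-(p+q)}M_{p+q}$ gives $\rho^{p+q}/(M_pM_q)\le A\,(H\rho)^{p+q}/M_{p+q}$; taking the supremum over $p,q$, that is over $r=p+q$, yields $e^{2M(\rho)}\le A\,e^{M(H\rho)}$, so $2M(\rho)\le M(H\rho)+\log A\le M(H\rho)+\ln_+A$ after enlarging $A$ to $\max(A,1)$.

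For \eqref{svojstvoasocirane2} and \eqref{svojstvoasocirane3} I would iterate \eqref{svojstvoasocirane1}: induction on $k$ gives $2^{k}M(\rho)\le M(H^{k}\rho)+C\,2^{k}$, so a dilation by $H^{k}$ at least doubles $M$ up to a constant, i.e. $t\mapsto\log M(e^{t})$ has a slope bounded below. For a prescribed $L\ge 1$ I would choose $k\sim\log_2 L$ to bound $LM(\rho)$ by $M(H^{k}\rho)+K_L$ and read off an admissible $B$ (with $\log B$ comparable to $\log H/\log 2$), obtaining \eqref{svojstvoasocirane3}; the companion bound $M(L\rho)\le\tfrac{3}{2}LM(\rho)+C$ then follows symmetrically, using also the convexity of $M(e^{t})$. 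The main obstacle is the constant bookkeeping: the crude iteration is too lossy when $L$ is close to $1$, and getting the precise factors $\tfrac{3}{2}L$ and $B^{L-1}$ uniformly down to $L=1$ requires upgrading the discrete geometric growth furnished by $(M.2)$ into a continuous lower bound on the logarithmic slope of $M$. That conversion is where I expect the real work to lie.
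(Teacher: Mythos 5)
The paper itself gives no argument for this lemma (it refers to \cite{CKP} for the proof), so your proposal has to be judged on its own merits. Your treatment of \eqref{svojstvoasocirane1} is correct and complete: writing $e^{2M(\rho)}=\sup_{p,q}\rho^{p+q}/(M_pM_q)$, inserting $M_{p+q}\le AH^{p+q}M_pM_q$, and taking the supremum over $r=p+q$ is exactly the classical Komatsu argument, and the $\ln_+$ bookkeeping is harmless because $M_0=1$ gives $e^{M(\rho)}=\sup_{p\ge 0}\rho^p/M_p\ge 1$. The other three parts contain genuine gaps.

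For \eqref{konveksiti} your binomial plan fails at precisely the step you wave at: $M_jM_{p-j}\le M_p$ bounds each term of $(\rho_1+\rho_2)^p/M_p$ by $\binom{p}{j}\,(\rho_1^{\,j}/M_j)(\rho_2^{\,p-j}/M_{p-j})\le \binom{p}{j}e^{M(\rho_1)+M(\rho_2)}$, and summing over $j$ leaves a factor $2^p$ multiplying the very quantity you are trying to produce; no growth assumption on $(M_p)$ can absorb a factor sitting outside the sequence. Worse, the gap is not fillable, because \eqref{konveksiti} is false as literally stated: under $(M.1)$ with $M_0=1$ one has $M_p\ge M_1^{\,p}$, hence $M(M_1)=0$, while $M(2M_1)\ge \ln_+\bigl(2M_1\cdot M_0/M_1\bigr)=\ln 2$, so $\rho_1=\rho_2=M_1$ violates the inequality for \emph{every} admissible sequence (for the Gevrey sequence $M_p=p!^{\,s}$ this reads $M(2)=\ln 2>0=2M(1)$). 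Only versions with dilated arguments or extra constants can hold, e.g. $M(\rho_1+\rho_2)\le M(2\rho_1)+M(2\rho_2)$, which follows at once from monotonicity and nonnegativity and is all the paper ever needs (cf.\ Remark \ref{observation}). Your ``conceptual reason'' is also wrong: $M$ is not concave --- it is convex in $\ln\rho$, vanishes identically on $(0,M_1]$, and its derivative jumps upward at each quotient $M_p/M_{p-1}$.

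For \eqref{svojstvoasocirane3} you correctly identify that iterating \eqref{svojstvoasocirane1} is too lossy near $L=1$, but you leave exactly that point open. It can be closed: with $\Phi(t)=M(e^t)$ convex and nondecreasing, $H>1$ (as one may assume) and $a=\ln_+A$, the inequality \eqref{svojstvoasocirane1} at $t-\ln H$ gives $\Phi(t)-\Phi(t-\ln H)\ge \tfrac12(\Phi(t)-a)$, and monotonicity of difference quotients of convex functions yields, for every $c>0$, $\bigl(\Phi(t+c)-\Phi(t)\bigr)/c\ \ge\ \bigl(\Phi(t)-\Phi(t-\ln H)\bigr)/\ln H\ \ge\ (\Phi(t)-a)/(2\ln H)$; taking $c=2(L-1)\ln H$ gives $L\Phi(t)\le \Phi\bigl(t+2(L-1)\ln H\bigr)+(L-1)a$, i.e. \eqref{svojstvoasocirane3} with $B=H^2$ and $K_L=(L-1)\ln_+A$, uniformly down to $L=1$. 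The more serious error is your claim that \eqref{svojstvoasocirane2} ``follows symmetrically'': it cannot, by any argument using only $(M.1)$ and $(M.2)$. The sequence $M_p=p!^{1/2}$ satisfies both (take $A=1$, $H=\sqrt 2$, since $\binom{p+q}{p}^{1/2}\le \sqrt2^{\,p+q}$), yet its associated function satisfies $M(\rho)\sim\rho^2/2$, so $M(2\rho)\sim 4M(\rho)$, contradicting $M(2\rho)\le 3M(\rho)+C$. Iterating \eqref{svojstvoasocirane1} produces only \emph{lower} bounds on $M(H^k\rho)$, i.e. statements of the type \eqref{svojstvoasocirane3}; the upper dilation bound \eqref{svojstvoasocirane2} is a genuinely different assertion, incompatible with superlinear growth of $M$, and any proof of it must use the non-quasianalyticity hypothesis $(M.3)'$ (under which $M(\rho)=o(\rho)$), which your proposal never invokes.
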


\begin{remark} \label{observation} By Lemma \ref{osobineasocirane}, it follows that  estimates of the form $ |f(\cdot)| \lesssim e^{M(h|\cdot|)}$ for some/every $h>0$ and $ |f(\cdot)| \lesssim e^{k M(|\cdot|)}$ for some/every $k>0$ are equivalent. This observation will be often used in proofs.
\end{remark}

\subsection{Gelfand-Shilov spaces}
We give here only the basic properties and refer to \cite{GS, NR} for a more detailed discussion and applications in partial differential equations.

\begin{definition} \label{GSoftypeS}
Let there be given sequences of positive numbers $ (M_p)_{p \in
\mN_0} $ and $ (N_q)_{q \in \mN_0} $ which satisfy $ (M.1) $ and $
(M.2).$ Let $ \cS^{N_{q} , B} _{M_{p}, A}
(\R^d) $ be defined by
$$
 \cS^{N_{q} , B} _{M_{p}, A} (\R^d) =
\{ f \in C^\infty(\R^d) \; |\;
 \| x^{\alpha} \partial^{\beta}  f  \|_{L^\infty} \leq
 C A^{\alpha} M_{|\alpha|} B^{\beta} N_{|\beta|}, \;\;
\forall \alpha,\beta \in \N_{0} ^d \},
$$
for some positive constant $C,$ and $ A = (A_1,\dots,A_d),$ $ B =
(B_1,\dots,B_d),$ $A, B>0.$

Gelfand-Shilov spaces $ \Sigma^{N_q} _{M_p} (\R^d) $ and $  \cS^{N_q} _{M_p} (\R^d) $
are projective and inductive limits
of  (Fr\'echet) spaces $ \cS^{N_{q} , B} _{M_{p}, A}
(\R^d) $ with respect to $A$ and $B$:
$$
\Sigma^{N_q} _{M_p} (\R^d) : = {\rm proj} \lim_{A>0, B>0} \cS^{N_q , B}
_{M_p, A} (\R^d) ; \;\;\; \cS^{N_q} _{M_p} (\R^d)  : = {\rm ind} \lim_{A>0, B>0}
\cS^{N_q , B} _{M_p, A}  (\R^d).
$$

The corresponding dual spaces of $\Sigma^{N_q} _{M_p} (\R^d) $ and $\cS^{N_q} _{M_p} (\R^d) $
are the spaces of ultradistributions of Beurling and Roumieu type respectively:
$$
(\Sigma^{N_q} _{M_p} )' (\R^d) : = {\rm ind} \lim_{A>0, B>0}
(\cS^{N_q , B} _{M_p, A})' (\R^d) ;
$$
$$
(\cS^{N_q} _{M_p})' (\R^d)  : = {\rm proj} \lim_{A>0, B>0}
(\cS^{N_q , B} _{M_p, A})'  (\R^d).
$$
\end{definition}

\par

Gelfand-Shilov spaces are closed under translation, dilation,
multiplication with $x\in \R^d,$ and differentiation. Moreover, they are closed under the action of certain differential
operators of infinite order (ultradifferentiable operators in the terminology of Komatsu).

Whenever nontrivial, Gelfand-Shilov spaces contain "enough functions" in the following sense.
A test function space $ \Phi $ is "rich enough" if
$$ \int f(x) \varphi (x) dx = 0, \;\;\; \forall   \varphi \in \Phi
\Rightarrow f(x) \equiv 0 \;\; (a.e.).
$$

\par

The following theorem enlightens the fundamental  properties of Gelfand-Shilov spaces implicitly contained in their definition.
Among other things, it states that the decay and regularity estimates of $f \in
{\mathcal S}^{N_q} _{M_p} (\R^d)$ can be studied separately.

\begin{theorem} \label{GS-characterization}
Let there be given sequences of positive numbers $ (M_p)_{p \in
\mN_0} $ and $ (N_q)_{q \in \mN_0} $ which satisfy $ (M.1) $, $
(M.2)$ and $ p!  \subset M_p N_p $ ($ p! \prec M_p N_p $,
respectively). Moreover, let $ M(\cdot) $ and $ N(\cdot) $ denote the associated functions for  
$ (M_p)_{p \in \mN_0} $ and $ (N_q)_{q \in \mN_0} $ respectively.
Then the following conditions are equivalent:
\begin{enumerate}
\item $f \in {\mathcal S}^{N_q} _{M_p} (\R^d)$
($ f \in \Sigma^{N_q} _{M_p}(\R^d), $ respectively).
\item  There exist constants $A,B\in \R^d,$ $ A,B >0$
(for every   $A,B\in \R^d,$ $ A,B >0$ respectively), and there exist
$ C>0$ such that
$$
\| e^{M(|Ax|)} \partial^{q}  f (x) \|_{L^\infty} \leq C B^{q} N_{|q|}, \quad\forall
p,q\in \N^d_0.
$$
\item There exist constants $A,B\in \R^d,$ $ A,B >0$
(for every  $A,B\in \R^d,$ $ A,B >0,$ respectively), and there exist
$ C>0$ such that
$$ \| x^{p}  f (x) \|_{L^\infty} \leq C A^{p} M_{|p|} \quad\mbox{and}\quad
\| \partial^{q}  f (x) \|_{L^\infty} \leq C B^{q} N_{|q|}, \quad\forall
p,q\in \N^d_0. $$
\item There exist constants $A,B\in \R^d,$ $ A,B >0$
(for every  $A,B\in \R^d,$ $ A,B >0,$ respectively), and there exist
$ C>0$ such that
$$
\| x^{p}  f (x) \|_{L^\infty} \leq C A^{p} M_{|p|} \quad\mbox{and}\quad
\| \omega^{q}  \hat{f} (\omega) \|_{L^\infty} \leq C B^{q} N_{|q|}, \quad\forall
p,q\in \N^d_0.
$$
\item There exist constants $A,B\in \R^d,$ $ A,B >0$
(for every  $A,B\in \R^d,$ $ A,B >0,$ respectively), such that
$$ \|  f(x) e^{M(|Ax|)}\;  \|_{L^\infty} < \infty \quad\mbox{and}\quad
\| \hat f (\o) \; e^{N(|B\o|)} \|_{L^\infty} < \infty. $$ 
\end{enumerate}
\end{theorem}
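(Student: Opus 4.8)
The plan is to fix one central equivalence between polynomial decay and exponential decay governed by the associated function, and to arrange the five conditions around it so that the only genuinely analytic step is the decoupling of decay from regularity. Throughout I would treat the Roumieu and Beurling cases simultaneously: every manipulation below preserves the quantifier on the constants $A,B$ (``for some'' for $\mathcal S^{N_q}_{M_p}$, ``for every'' for $\Sigma^{N_q}_{M_p}$), and by Remark \ref{observation} the various rescalings $A\mapsto cA$ needed along the way are harmless.

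The basic tool is the identity \eqref{asocirana}, which gives $\rho^{|p|}\le M_{|p|}\,e^{M(\rho)}$ for every $p$, equivalently $e^{-M(\rho)}\le M_{|p|}/\rho^{|p|}$. From a family of bounds $\|x^p g\|_{L^\infty}\le CA^pM_{|p|}$ valid for all $p$, I take the infimum over $p$ to obtain $|g(x)|\le C\,e^{-M(|x|/A)}$, that is $\|g\,e^{M(|x|/A)}\|_{L^\infty}<\infty$; conversely the displayed inequality turns an exponential bound back into the polynomial family (a routine reduction from $|x|$ to multi-indices being understood). Applying this equivalence to $g=\partial^\beta f$, uniformly in $\beta$ and carrying the factor $B^\beta N_{|\beta|}$ along, yields $(1)\Leftrightarrow(2)$; applying it to $g=f$ with $(M_p)$ and to $g=\hat f$ with $(N_q)$ yields $(4)\Leftrightarrow(5)$. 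Neither step uses the Fourier transform in an essential way nor the product condition.

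The equivalence $(3)\Leftrightarrow(4)$ is Fourier-analytic. Since $\widehat{\partial^q f}(\omega)=(2\pi i\omega)^q\hat f(\omega)$, one has $\|\omega^q\hat f\|_{L^\infty}\le (2\pi)^{-|q|}\|\partial^q f\|_{L^1}$ and $\|\partial^q f\|_{L^\infty}\le (2\pi)^{|q|}\|\omega^q\hat f\|_{L^1}$, the geometric factors being absorbed into a rescaling of $B$. To turn the $L^1$-norms into the $L^\infty$-bounds appearing in $(3)$ and $(4)$ I factor out an integrable weight $\langle\cdot\rangle^{-2N}$ with $2N>d$ and estimate, in the direction producing $(4)$ from $(3)$, $\|\partial^q f\|_{L^1}\lesssim\|\langle x\rangle^{2N}\partial^q f\|_{L^\infty}\lesssim\sum_{|p|\le 2N}\|x^p\partial^q f\|_{L^\infty}$, while in the opposite direction $N_{|q+p|}\lesssim H^{|p|}N_{|q|}N_{|p|}$ is split by $(M.2)$, the finitely many $|p|\le 2N$ contributing only a constant. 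The first of these estimates needs the mixed seminorm $\|x^p\partial^q f\|_{L^\infty}$, i.e.\ exactly the joint bound of $(1)$, which is why $(3)\Leftrightarrow(4)$ rests on the decoupling below.

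The implication $(1)\Rightarrow(3)$ is trivial (put $\beta=0$ and $\alpha=0$), and all that remains is $(3)\Rightarrow(1)$: from the separate estimates $\|x^p f\|_{L^\infty}\le CA^pM_{|p|}$ and $\|\partial^q f\|_{L^\infty}\le CB^qN_{|q|}$ one must recover the joint estimate $\|x^p\partial^q f\|_{L^\infty}\le C'(A')^pM_{|p|}(B')^qN_{|q|}$. I expect this to be the main obstacle, since it amounts to showing that the derivatives of $f$ inherit the exponential decay of $f$ with controlled constants. The mechanism I would use is a local interpolation of Landau--Kolmogorov / Taylor type: on the ball $B(x,\delta)$ with $\delta<|x|/2$ one bounds $|\partial^q f(x)|$ by a combination of $\delta^{-|q|}\sup_{B(x,\delta)}|f|$ and $\delta^{|q|}\sup_{B(x,\delta)}|\partial^{2q} f|$, inserts the exponential decay of $f$ and the $(N_q)$-bound on $\partial^{2q} f$ (reduced to $N_{|q|}$ via $(M.2)$), and optimizes in $\delta$ to produce decay of $\partial^q f$; multiplying by $x^p$ and reading the decay back through the associated function gives the joint bound. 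The factorial losses inherent in such interpolation are absorbed precisely by the hypothesis $p!\subset M_pN_p$ (in the Gevrey case $M_p=p!^s$, $N_q=q!^t$ this is the classical requirement $s+t\ge1$), which is therefore indispensable here. Once $(3)\Rightarrow(1)$ is in hand, the chain $(1)\Leftrightarrow(2)$ and $(1)\Leftrightarrow(3)\Leftrightarrow(4)\Leftrightarrow(5)$ closes and the theorem follows.
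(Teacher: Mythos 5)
First, a point of reference: the paper does not prove this theorem at all --- it states it and says it ``is proved in \cite{CCK} and reinvented many times afterwards,'' so your proposal has to be judged against the known proofs rather than against an argument in the text. Your outer architecture is sound and standard: the passage between a family of bounds $\|x^{p}g\|_{L^\infty}\le CA^{p}M_{|p|}$ and a single bound $|g(x)|\lesssim e^{-M(|x|/(\sqrt d\,A))}$ via \eqref{asocirana} is correct and preserves the Beurling/Roumieu quantifiers, which settles $(1)\Leftrightarrow(2)$ and $(4)\Leftrightarrow(5)$; the Fourier steps (inversion, the $\langle\cdot\rangle^{-(d+1)}$ trick, and $(M.2)$ to reduce $N_{|q+r|}$) correctly give $(1)\Rightarrow(4)$ and $(4)\Rightarrow(3)$. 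So, as you yourself say, everything rests on the decoupling $(3)\Rightarrow(1)$.

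That step is where there is a genuine gap, and it is twofold. First, the interpolation inequality you invoke,
\begin{equation*}
|\partial^{q}f(x)|\ \le\ K_{q}\Bigl(\delta^{-|q|}\sup_{B(x,\delta)}|f|\;+\;\delta^{|q|}\sup_{B(x,\delta)}|\partial^{2q}f|\Bigr),
\end{equation*}
is not available for free with useful constants: iterating the elementary Taylor/Landau step $m_{j}\le 2t^{-1}m_{j-1}+\tfrac t2\,m_{j+1}$ and convexifying gives only $K_{q}\approx 2^{|q|^{2}}$, and Chebyshev-polynomial examples show that sup-norm interval versions of the Kolmogorov inequality with $n=2q$ genuinely cost constants growing like $(cq)^{2q}$; to get the exponential constants $K_q\le C^{|q|}$ that your optimization in $\delta$ actually requires, one needs nontrivial approximation theory (Gorny or Schoenberg--Cavaretta type inequalities, or a Jackson--Bernstein argument for interior-point evaluation), which you neither prove nor cite. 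Second, and more seriously, your fallback claim that ``the factorial losses are absorbed precisely by $p!\subset M_pN_p$'' is mechanically wrong: that hypothesis gives $q!\le CH^{|q|}M_{|q|}N_{|q|}$, i.e.\ it trades a factorial against the product of \emph{both} sequences at the \emph{same} index, whereas in your bound the loss multiplies $N_{|q|}$ alone (the $M$-sequence occurs only at the unrelated index $p$); inserting the hypothesis would replace $q!\,N_{|q|}$ by $M_{|q|}N_{|q|}^{2}$, and the excess factor $M_{|q|}N_{|q|}\gtrsim q!/(CH)^{|q|}$ can never be dominated by any $B'^{q}$, as condition $(1)$ demands. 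In the arguments that do work, the hypothesis enters in a structurally different place: for instance one writes $\|x^{p}\partial^{q}f\|_{L^2}^{2}=\pm\int \partial^{q}\bigl(x^{2p}\partial^{q}f\bigr)\overline f\,dx$ and expands by Leibniz, so that the factorials $\binom qk\,(2p)!/(2p-k)!\le 4^{p}2^{q}\,k!$ appear multiplied by the \emph{lowered} quantities $\|x^{2p-k}f\|_{L^2}\|\partial^{2q-k}f\|_{L^2}$; there one may split $k!\le CH^{|k|}M_{|k|}N_{|k|}$ and recombine through the log-convexity consequences $M_{|k|}M_{|2p-k|}\le M_{|2p|}$, $N_{|k|}N_{|2q-k|}\le N_{|2q|}$ of $(M.1)$, and finish with $(M.2)$. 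Your local interpolation offers no such pairing of the factorial loss with lowered indices of both sequences, so the key implication $(3)\Rightarrow(1)$ --- and with it the whole chain --- remains unproved.
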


Theorem \ref{GS-characterization} is proved in \cite{CCK} and reinvented many times afterwards,
see e.g. \cite{CPRT1, GZ, KPP, NR,  PT1,  Teofanov2018}.

By the above characterization $ {\mathcal F}  {\mathcal S}^{N_q} _{M_p}(\mathbb{R}^d)  =  {\mathcal S}^{M_p} _{N_q} (\mathbb{R}^d).  $
When $ M_p = N_q $ we put  $ {\mathcal S}^{M_p} _{M_p} (\mathbb{R}^d) = {\mathcal S}^{\{M_p\}}  (\mathbb{R}^d) $, and
$ {\Sigma}^{M_p} _{M_p} (\mathbb{R}^d) = {\mathcal S}^{(M_p)}  (\mathbb{R}^d) $.
Moreover, the Fourier transform $\mathcal F$ extends to a homeomorphism on $(\cS^{\{M_p\}})' (\mathbb{R}^d)$
and on $(\cS^{(M_p)})' (\mathbb{R}^d)$ in a usual way.

\par

Next we discuss the important case when $ (M_p)_{p \in
\mN_0} $ and $ (N_q)_{q \in \mN_0} $ are hcosen to be the Gevrey sequences
$ M_p = p!^{r}, $ $ p\in \mN_{0}$  and $ N_q = q!^{s}, $  $ q\in
\mN_{0}$, for some $r,s\geq 0$, then we use the notation
$$ {\mathcal S}^{N_q} _{M_p} (\R^d) =
{\mathcal S}^{s} _{r} (\R^d) \;\;\; \text{and} \;\;\; \Sigma^{N_q} _{M_p} (\R^d) =  \Sigma^{s} _{r} (\R^d).
$$
If, in addition, $ s = r, $ then we put
$$ {\mathcal S}^{\{ s\}}  (\R^d) =
{\mathcal S}^{s} _{s} (\R^d)  \;\;\; \text{and} \;\;\; \Sigma^{(s)} (\R^d) =  \Sigma^{s} _{s} (\R^d).
$$

The choice of Gevrey sequences is the most often used choice in the literature since it
serves well in different contexts. For example, when discussing
nontriviality of  Gelfand-Shilov spaces we have the following:
\begin{enumerate}
\item the space $ {\mathcal S}^{s} _{r} (\R^d)$
is nontrivial if and only if $ s+r > 1 $, or $ s+r = 1$ and $sr>0$,
\item if $ s+r \geq 1 $ and $s<1$, then every  $ f \in {\mathcal S}^{s} _{r} (\R^d)$
can be extended to the complex domain as an entire function,
\item if $ s+r \geq 1 $ and $s=1$, then every  $ f \in {\mathcal S}^{s} _{r} (\R^d)$
can be extended to the complex domain as a holomorphic function in a strip
$ \{ x+iy \in \mC ^d \; : \; |y| < T \} $ some $ T>0$
\item the space $ \Sigma^{s} _{r} (\R^d)$
is nontrivial if and only if $ s+r > 1 $, or, if $ s+r = 1$ and $sr>0$ and $ (s,r)\neq (1/2,1/2)$.
\end{enumerate}

We refer to \cite{GS} or \cite{NR} for the proof in the case of $ {\mathcal S}^{s} _{r} (\R^d)$,
and to \cite{P1} for the spaces $  \Sigma^{s} _{r} (\R^d) $, see also \cite{Toft-2012}.

The discussion here above shows that Gelfand-Shilov classes $ {\mathcal S}^{s} _{r} (\R^d)$
consist of quasi-analytic functions when  $s \in (0,1)$.
This is in a sharp contrast with e.g. Gevrey classes
$  G^{s} (\mathbb{R}^d),$ $s>1$, another family of functions commonly used in regularity theory
of partial differential equations, whose elements are always non-quasi-analytic. Recall,
for $ 1 < s < \infty $ and an open set $ X \in \R^d$ the Gevrey class $ G^s (X) $ is given by
\begin{multline*}
G^s (X) = \{ \phi \in C^\infty(X) \;\; | \;\; (\forall K \Subset X ) (\exists C > 0) ( \exists h > 0 ) \\
\sup_{x \in K} \left | \partial^\alpha  \phi (x) \right| \leq C h^{|\alpha|} |\alpha| ! ^s
\}.
\end{multline*}
We refer to \cite{R} for microlocal analysis in Gervey classes and note that
$$ G^{s} _0  (\mathbb{R}^d) \hookrightarrow {\mathcal S}_s ^s (\mathbb{R}^d)
\hookrightarrow  G^{s} (\mathbb{R}^d), \;\;\;s > 1.
$$

\par

When the spaces are nontrivial we have the inclusions:
$$
\Sigma^{s} _{r}  (\R^d) \hookrightarrow {\mathcal S}^{s} _{r} (\R^d) \hookrightarrow {\mathcal S}  (\R^d),
$$
and $ {\mathcal S}  (\R^d) $ can be revealed as the limiting case of  spaces $ S^{s} _{r} (\R^d)$, i.e.
$$
{\mathcal S}  (\R^d) = {\mathcal S}^{\infty} _{\infty} (\R^d) = \lim_{s, r \rightarrow \infty} {\mathcal S}^{s} _{r} (\R^d),
$$
when the passage to the limit when $ s$ and $ r$ tend to infinity is interpreted correctly, see \cite[page 169]{GS}.

\begin{remark} \label{fine tuning}
Note that $  \Sigma^{1/2} _{1/2} (\mathbb{R}^d)= \{ 0 \}$ and $  \Sigma^{s} _{s} (\mathbb{R}^d)$ is
dense in the Schwartz space whenever $s>1/2$. One may consider a "fine tuning", that is the
spaces $\Sigma^{N_q} _{M_p} (\mathbb{R}^d)$ such that
$$
\{ 0 \} = \Sigma^{1/2} _{1/2} (\mathbb{R}^d) \hookrightarrow \Sigma^{N_q} _{M_p} (\mathbb{R}^d)\hookrightarrow
{\mathcal S}^{N_q} _{M_p} (\mathbb{R}^d) \hookrightarrow \Sigma^{s} _{s}(\mathbb{R}^d), \;\;\; s > 1/2,
$$
see also Remark \ref{fino tuniranje}.
\end{remark}

We refer to \cite{Toft-2017} where it is shown how to overcome the minimality condition
($ \Sigma^{1/2} _{1/2} (\R^d) = 0$) by transferring the estimates for
$ \| x^{\alpha} \partial^{\beta}  f  \|_{L^\infty} $ into the  estimates of the form
$ \|  H^N f  \|_{L^\infty}  \lesssim h^N (N!)^{2s},$ for some (for every ) $ h>0$,
where $H = |x|^{2} -  \Delta $ is the harmonic oscillator.

\par

We also mention that the Gelfand-Shilov space of analytic functions   $  {\mathcal S} ^{(1)} (\mathbb{R}^d ) := \Sigma^{1} _{1}  (\R^d)$
plays a prominent role in the theory since it
is isomorphic to  the Sato test function space for the space of Fourier hyperfunctions.
More precisely, if  $ f \in  {\mathcal S}^{( 1)} (\mathbb{R}^d) $ then it can be extended to a
holomorphic function $f(x+iy)$ in the strip
$ \{ x+iy \in \mC ^d \; : \; |y| < T \} $ for some $ T>0$.
According to Theorem \ref{GS-characterization}, we have
$$ f \in  {\mathcal S}^{( 1)} (\mathbb{R}^d) \Longleftrightarrow
\sup_{x\in \mathbb{R}^d } |f(x)  e^{h\cdot |x|}| < \infty \;
\; \text{and} \;
\sup_{\omega \in \mathbb{R}^d } | \hat f (\omega)  e^{h\cdot |\omega|} | < \infty, $$
for every $ h > 0.$ This representation is used to establish an isomorphism between its dual space
$ ({\mathcal S} ^{(1)})'   (\mathbb{R}^d ) $ and the space of Fourier hyperfunctions, see \cite{CCK1994} for details.

\par

Already in \cite{GS} it is shown that the Fourier transform is a topological isomorphism between
$ {\mathcal S}_r ^s  (\R^d)$ and $ {\mathcal S}_s ^r  (\R^d)$
$( {\mathcal F} ( {\mathcal S}_r ^s) = {\mathcal S}_s ^r)$,
which extends to a continuous linear transform
from $ ({\mathcal S}_r ^s) '  (\R^d)$ onto $ ({\mathcal S}_s ^r)'  (\R^d)$.
In particular, if $ s = r $ and $s \geq 1/2 $
then $ {\mathcal F} ( {\mathcal S}_s ^s) (\R^d)= {\mathcal S}_s ^s (\R^d),$
and $\cS^{1/2} _{1/2} (\mathbb{R}^d)$ is the smallest non-empty Gelfand-Shilov space invariant
under the Fourier transform, cf. \cite[Remark 1.2]{Teofanov2018}.
Similar assertions hold for $\Sigma^{s} _{r} (\R^d).$

\par

\subsection{Test function spaces on open sets}

Since we are interested in non-quasianalytic classes, we restrict our intention to the sequences which satisfy $(M.1) - (M.3)'$, and
refer to \cite{K1} for a more general setting.

\par

\begin{definition}
Let there be given a sequence $(M_p)$, $p\in \mathbb{N}^d,$
which satisfies $(M.1) - (M.3)'$ and let $ X $ be an open set in $ \mathbb{R}^d$.
For a given compact set $K \subset X$ and a constant $ A>0$ we denote by $ \mathcal{E}^{M_p} _{A,K} (X)$
the space of all  $\fy \in C^{\infty}(X)$ such that the norm
\begin{equation} \label{ultra-norm}
\| \fy \|_{M_p,A,K} = \sup_{p\in \mathbf N_0^n} \sup_{x\in K}
\frac{A^{|p|}}{M_p} |\fy^{(p)}(x)| <\infty.
\end{equation}
Note that $ \| \cdot \|_{M_p,A,K}$ is a norm in $ \mathcal{E}^{M_p} _{A,K} (X)$.

The space of functions $\fy \in C^{\infty}(X)$ such that
\eqref{ultra-norm} holds and $ \supp \fy \subseteq K$ is denoted
by $ \mathcal{D}^{M_p} _{A} (K) $.
\par

Let $(K_n)_n$ be a sequence of compact sets such that
$K_n\subset \subset K_{n+1}$ and $\bigcup K_n =X$. Then
\begin{align*}
\mathcal{E}^{(M_p)}(X) &= \operatorname{proj}\lim_{n\to \infty}
(\operatorname{proj}\lim_{A\to \infty} \mathcal{E}^{M_p} _{A,K_n})(X),
\\[1ex]
\mathcal{E}^{\{M_p\}}(X)&=\operatorname{proj}\lim_{n\to \infty}
(\operatorname{ind}\lim_{A\to 0} \mathcal{E}^{M_p} _{A,K_n})(X),
\\[1ex]
\mathcal{D}^{(M_p)}(X) &= \operatorname{ind}\lim_{n\to \infty}
(\operatorname{proj}\lim_{A\to \infty} \mathcal{D}^{M_p} _A (K_n))
\\[1ex]
 &= \operatorname{ind}\lim_{n\to \infty}
(\mathcal{D}^{(M_p)} _{K_n}),
\\[1ex]
\mathcal{D}^{\{M_p\}}(X)&=\operatorname{ind}\lim_{n\to \infty}
(\operatorname{ind}\lim_{A\to 0} \mathcal{D}^{M_p} _A (K_n))
\\[1ex]
 &= \operatorname{ind}\lim_{n\to \infty}
(\mathcal{D}^{\{M_p\}} _{K_n}).
\end{align*}
\end{definition}

\par

Obviously, $\mathcal{D}^{(M_p)}(X)$  ($\mathcal{D}^{\{M_p\}}(X)$  resp.) is the subspace of
$ \mathcal{E}^{(M_p)} (X) $ (of $ \mathcal{E}^{\{M_p\}} (X) $ resp.)
whose elements are compactly supported.

\par

\begin{remark}
Let $*$ denote $(M_p)$ or $\{ M_p \}$. Then $\mathcal{D}^{*}$, $\mathcal{S}^{*}$ and $\mathcal{E}^{*}$ correspond to $C_0^\infty$, $\mathcal{S}$ and $C^\infty$, respectively, and
$$
\mathcal{D}^{*}\subseteq C_0^\infty ,\quad \mathcal{S}^{*}\subseteq \mathcal{S}\quad \text{and}\quad
\mathcal{E}^{*}\subseteq C^\infty .
$$
\end{remark}

\par

The spaces of linear functionals over  $\mathcal{D}^{(M_p)}(X)$ and
$\mathcal{D}^{\{M_p\}}(X)$, denoted by $ (\mathcal{D}^{(M_p)})'(X)$
and $ (\mathcal{D}^{\{M_p\}})'(X)$ respectively, are called the
spaces of {\em ultradistributions} of Beurling and Roumieu  type
respectively, while the spaces of linear functionals over  $
\mathcal{E}^{(M_p)}(X)$ and    $ \mathcal{E}^{\{M_p\}}(X)$, denoted by
$ (\mathcal{E}^{(M_p)})'(X)$ and $ (\mathcal{E}^{\{M_p\}})'(X)$,
respectively are called the spaces of {\em ultradistributions
of compact support} of  Beurling  and Roumieu type respectively. Clearly,
\begin{align*}
(\mathcal{E}^{ \{M_p\} })'(X) &\subseteq (\mathcal{E}^{(M_p)})'(X),
\quad
 (\mathcal{E}^{(M_p)})'(X) \subseteq
(\mathcal{E}^{(M_p)})'(\mathbb{R}^d) \quad
\text{and}
\\[1ex]
(\mathcal{E}^{\{M_p\}})'(X) &\subseteq (\mathcal{E}^{\{M_p\}})'(\mathbb{R}^d).
\end{align*}
Moreover,
\begin{align*} (\mathcal{E}^{\{M_p\}})' (\mathbb{R}^d)  &\subseteq
(\cS^{\{M_p\}})' (\mathbb{R}^d)
\subseteq
(\mathcal{D}^{\{M_p\}})' (\mathbb{R}^d)
\\[1ex]
 \intertext{and}
 (\mathcal{E}^{(M_p)})'(\mathbb{R}^d) &\subseteq
(\cS^{(M_p)})' (\mathbb{R}^d)
\subseteq
(\mathcal{D}^{(M_p)})' (\mathbb{R}^d).
\end{align*}
Any ultra-distribution with compact support
can be viewed as an element of  $ (\cS^{(1)})' (\mathbb{R}^d)$. More generally, by using similar reasoning as in the case of distributions (see \cite{Ho1}), it follows that $\mathcal E^*$ are exactly those elements in $\mathcal S^*$ or $\mathcal D^*$ with compact support.

The following fact follows from the Paley-Wiener type theorems which can be found e.g. in \cite{K1}.

\begin{theorem} \label{Paley-Wiener theorem}
Let there be given a sequence $(M_p)$, $p\in \mathbb{N}^d,$
which satisfies $(M.1) - (M.3)'$ and let $ K $ be a compact convex set in $ \mathbb{R}^d$.
Then $ \varphi \in \mathcal{D}^{(M_p)} _{K} $ ($ \varphi \in \mathcal{D}^{\{M_p\}} _{K} $ resp.)
if and only if
for every $h>0$ there is  a constant $C>0$ (there are constants $h>0$ and $C>0$ resp.)
such that
$$
| \hat \varphi (\xi) | \leq C e^{-h M(|\xi|)}, \;\;\; \xi \in \mathbb{R}^d.
$$
\end{theorem}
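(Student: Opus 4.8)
The plan is to prove both implications through the Fourier--Laplace transform, exploiting the Legendre-type duality between the sequence $(M_p)$ and its associated function. Since $M_0=1$ and $(M.1)$ holds, one has $e^{M(\rho)}=\sup_{k\ge 0}\rho^k/M_k$, which yields the two reciprocal identities $\inf_k M_k\rho^{-k}=e^{-M(\rho)}$ and $\sup_{\rho>0}\rho^k e^{-M(\rho)}=M_k$. These let me pass back and forth between derivative bounds (powers of $M_k$) and Fourier decay (powers of $e^{-M}$). Throughout I reduce multi-indices to the scalar sequence by using $M_{|\alpha|}$ and, in the frequency variable, by choosing for each $\xi$ a coordinate $j$ with $|\xi_j|\ge |\xi|/\sqrt d$ and differentiating only in that direction.

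\emph{Necessity.} Let $\varphi\in\mathcal{D}^{(M_p)}_K$. Integration by parts (legitimate since $\supp\varphi\subseteq K$) gives $(2\pi i\xi)^{\alpha}\hat\varphi(\xi)=\int_K\partial^{\alpha}\varphi(x)\,e^{-2\pi i x\xi}\,dx$, whence $|2\pi\xi|^{|\alpha|}|\hat\varphi(\xi)|\le |K|\sup_K|\partial^\alpha\varphi|$. Taking $\alpha=k e_j$ and inserting the defining bound $|\partial^\alpha\varphi|\le C_A M_{k}A^{-k}$ yields $|\hat\varphi(\xi)|\le C_A|K|\,M_k\,(2\pi A|\xi|/\sqrt d)^{-k}$ for all $k$; the infimum over $k$ and the first duality identity give $|\hat\varphi(\xi)|\le C_A|K|\,e^{-M(2\pi A|\xi|/\sqrt d)}$. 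It remains to rescale the argument. In the Beurling case $A$ is arbitrary, so given $h>0$ I enlarge $A$ until $2\pi A/\sqrt d\ge B^{h-1}$ and apply \eqref{svojstvoasocirane3} with $L=h$ to obtain $e^{-M(2\pi A|\xi|/\sqrt d)}\le e^{K_h}e^{-hM(|\xi|)}$, which is the estimate for every $h>0$. In the Roumieu case $A$ is fixed, so $c:=2\pi A/\sqrt d$ is a fixed positive constant; if $c<1$ I invoke \eqref{svojstvoasocirane2} with $L=1/c$ to get $M(c|\xi|)\ge \tfrac{2c}{3}M(|\xi|)-C'$, producing the single admissible exponent $h=2c/3>0$.

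\emph{Sufficiency.} Assume the decay estimate. Since $M(\rho)$ grows faster than $\ln\rho^{k}$ for every $k$, the factor $e^{-hM(|\xi|)}$ is rapidly decreasing, so Fourier inversion $\varphi(x)=\int\hat\varphi(\xi)e^{2\pi i x\xi}\,d\xi$ and all differentiations under the integral are justified, giving $\varphi\in C^\infty$ with $|\partial^\alpha\varphi(x)|\le (2\pi)^{|\alpha|}\int|\xi|^{|\alpha|}|\hat\varphi(\xi)|\,d\xi$. Splitting $e^{-hM}=e^{-h_1M}e^{-(h-h_1)M}$, bounding the first factor by the second duality identity (after a scaling from Lemma~\ref{osobineasocirane}) as $\sup_\xi|\xi|^{|\alpha|}e^{-h_1M(|\xi|)}\lesssim \widetilde A^{\,|\alpha|}M_{|\alpha|}$, and keeping $e^{-(h-h_1)M}$ as an integrable tail, I obtain the ultradifferentiable bounds $|\partial^\alpha\varphi|\le C\widetilde A^{|\alpha|}M_{|\alpha|}$, with $\widetilde A$ as small as desired in the Beurling case (every $h$ available) and fixed in the Roumieu case. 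This gives the required regularity; the localization $\supp\varphi\subseteq K$ is then obtained by continuing $\hat\varphi$ to an entire function on $\mathbb{C}^d$ and controlling its growth in the imaginary directions by the support function $H_K$ of the convex set $K$, after which the standard Paley--Wiener--Schwartz contour shift forces the support into $K$.

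\emph{Main obstacle.} The regularity halves are essentially bookkeeping with the two duality identities, the only subtlety being the correct matching of the ``some $h$''/``every $h$'' quantifiers through the scaling estimates \eqref{svojstvoasocirane2}--\eqref{svojstvoasocirane3} (and the harmless factor $\sqrt d$ from the coordinate reduction). The genuine difficulty is the support statement in the sufficiency direction: the real-variable decay alone controls only regularity, and recovering $\supp\varphi\subseteq K$ requires the holomorphic Fourier--Laplace extension together with the growth bound keyed to $H_K$, where convexity of $K$ is indispensable; this is precisely the content imported from Komatsu's Paley--Wiener theorem in \cite{K1}.
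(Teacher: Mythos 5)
The paper offers no proof of this theorem at all --- it is quoted directly from Komatsu \cite{K1} --- so your argument cannot be measured against an in-paper proof; judged on its own terms, the necessity direction and the regularity half of sufficiency are correct. The integration-by-parts/duality computation giving $|\hat\varphi(\xi)|\le C_A|K|\,e^{-M(2\pi A|\xi|/\sqrt d)}$ is right, your matching of the Beurling/Roumieu quantifiers through \eqref{svojstvoasocirane3} and \eqref{svojstvoasocirane2} is exactly the bookkeeping required, and the converse derivation of $|\partial^\alpha\varphi|\le C\widetilde A^{|\alpha|}M_{|\alpha|}$ with $\widetilde A$ arbitrarily small (Beurling) or fixed (Roumieu) is sound. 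Notably, these are the only parts of the theorem the paper ever uses: in the lemma on local Fourier--Lebesgue spaces and in Theorem \ref{wavefrontprop11} only the implication ``$\varphi\in\mathcal D^{(M_p)}(X)$ $\Rightarrow$ decay of $\hat\varphi$'' is invoked.

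The genuine gap is the support statement, and it is worse than a missing detail: the step you describe cannot be carried out. From the hypothesis $|\hat\varphi(\xi)|\le Ce^{-hM(|\xi|)}$ on $\mathbb{R}^d$ alone there is no entire continuation of $\hat\varphi$ with growth $e^{H_K(\mathrm{Im}\,\zeta)}$ to shift contours with; that bound is an additional \emph{hypothesis} in Komatsu's Paley--Wiener theorem, not a consequence of real-axis decay. Concretely, $(M.3)'$ forces $M(\rho)=o(\rho)$ (monotonicity of $M$ together with $\int_1^\infty M(\rho)\rho^{-2}\,d\rho<\infty$), so the Gaussian $\varphi(x)=e^{-\pi|x|^2}$ satisfies the stated decay for every $h>0$ yet has no compact support; hence the ``if'' direction, read literally, is false. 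The statement is only correct under the implicit reading that $\varphi$ is a priori a smooth function supported in $K$ (which is how the paper uses it), and under that reading your regularity argument already finishes the proof and the entire-function machinery is superfluous. So either make the a priori support assumption explicit and delete the contour-shift step, or strengthen the hypothesis to the Fourier--Laplace bound $|\hat\varphi(\zeta)|\le Ce^{H_K(\mathrm{Im}\,\zeta)-hM(|\zeta|)}$ on $\mathbb{C}^d$ --- in which case the support recovery is precisely Komatsu's theorem and not something your argument supplies.
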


\section{Wave-front sets in weighted Fourier-Lebesgue spaces}\label{sec2}

Although in principle both Beurling and Roumieu cases  could be treated simultaneously (as we did in Section \ref{sec1}),
in order to simplify the exposition, from now on we will treat the Beurling case only. See also \cite{JPTT3} for a discussion related to a slight difference between the cases.

Throughout the section $\{ M_p\}$ will always denote a sequence satisfies $ (M.1) - (M.3)'$ and $ M(\rho) $
denotes its associated function. For the notational convenience, the set of weights $\omega$  moderated with respect to the weight $ e^{M(\rho)} $ will be denoted by $ \mathcal{M}_{M(\rho)} (\mathbb{R}^{d})$ (instead of a more cumbersome notation $ \mathcal{M}_{e^{M(\rho)}} (\mathbb{R}^{d})$).

\par

Let $q\in [1,\infty ]$ and let $\omega \in \mathcal M_{M(\rho)} (\mathbb{R}^d)$.
The (weighted) Fourier Lebesgue space $\mathcal
FL^q_{(\omega )}(\mathbb{R}^d)$ is the inverse Fourier image of
$L^q _{(\omega )} (\mathbb{R}^d)$, i.{\,}e. $\mathcal FL^q_{(\omega )}(\mathbb{R}^d)$
consists of all $f\in (\mathcal S ^{(1)})'(\mathbb{R}^d)$ such that
\begin{equation*}
\nm f{\mathcal FL^{q}_{(\omega )}} \equiv \nm {\widehat f\cdot \omega }{L^q} .
\end{equation*}
is finite. If $\omega =1$, then the notation $\mathcal FL^q$
is used instead of $\mathcal FL^q_{(\omega )}$. We note that if
$\omega (\xi )=\eabs \xi ^s$, then $\mathcal
FL^{q}_{(\omega )}$ is the Fourier image of the Bessel potential space
$H^p_s$. 

\par

\begin{remark} \label{xdependence}
We may permit an $x$
dependency for the weight $\omega$ in the definition of Fourier
Lebesgue spaces. More precisely, for each $\omega \in \mathcal M_{M(\rho)} (\mathbb{R}^{2d})$
we let $\mathcal FL^q_{(\omega )}$ be the set of all ultradistributions $f$ such that
\[
\nm f{\mathcal FL^q_{(\omega )}}
\equiv \nm {\widehat f\, \omega (x,\cdo )}{L^q}
\]
is finite. Since $\omega$ is $v_k$-moderate it follows that
different choices of $x$ give rise to equivalent norms, hence
$\nm f{\mathcal FL^{q}_{(\omega )}}<\infty$ is
independent of $x$. Therefore, a $\mathcal FL^q_{(\omega )}(\mathbb{R}^d)$ is  independent of $x$ although $\nm \cdo {\mathcal
FL^{q}_{(\omega )}}$ might depend on $x$.
\end{remark}

\par

Next we introduce local  Fourier-Lebesgue spaces of ultradistributions related to the given sequence  $\{ M_p\}$.
Let $X $ be an open set in $ \mathbb{R}^d$ and let $\omega \in \mathcal{M}_{M(\rho)} (\mathbb{R}^{d})$.
The \emph{local} Fourier Lebesgue space $\mathcal FL^q_{(\omega ),loc}(X)$ consists of all
$f\in (\mathcal S ^{(1)})'(\mathbb{R}^d)$ such that $\fy f\in \mathcal FL^q_{(\omega )}(\mathbb{R}^d)$ for each
$\fy \in \mathcal{D}^{(M_p)} (X)$. It is a Fr\'echet space under the topology given by the family of
seminorms $f\mapsto  \nm {\fy f}{\mathcal FL^q_{(\omega )}}$, where $\fy \in \mathcal{D}^{(M_p)} (X)$,
and the  following  simple properties hold.

\begin{lemma}
Let there be given a sequence  $\{ M_p\}$ with the associate function $M(\rho),$ $\rho >0.$
Let $X $ be an open set in $ \mathbb{R}^d$ and  $\omega \in \mathcal{M}_{M(\rho)} (\mathbb{R}^{d})$. Then
\begin{equation}\label{FLqFLqlocemb}
\mathcal FL^q_{(\omega )}(\mathbb{R}^d)  \subseteq \mathcal FL^q_{(\omega),loc}(\mathbb{R}^d)
\subseteq \mathcal FL ^q_{(\omega ),loc}(X).
\end{equation}
Furthermore, let $ q_1, q_2 \in [1, \infty] $ and $ \omega_1, \omega_2 \in \mathcal{M}_{M(\rho)}(\mathbb{R}^d)$.
Then
\begin{equation}\label{incrFLloc}
\mathcal FL^{q_1}_{(\omega _1),loc}(X)\subseteq \mathcal
F L^{q_2}_{(\omega _2),loc}(X), \;\; \text{when} \;\; q_1\le q_2 \;\;
\text{and} \;\; \omega _2\lesssim \omega _1.
\end{equation}
\end{lemma}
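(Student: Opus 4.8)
The plan is to reduce both inclusions to convolution estimates in which the weight is transferred across a frequency shift by means of its $v$-moderateness, with $v=e^{M(\cdot)}$, and in which the rapid decay of the Fourier transform of a test function supplies the integrability needed for Young's inequality. The decisive input is Theorem \ref{Paley-Wiener theorem}: for $\varphi\in\mathcal D^{(M_p)}(X)$ one has $|\hat\varphi(\xi)|\le C_h e^{-hM(|\xi|)}$ for every $h>0$. Combined with $v(\xi)=e^{M(|\xi|)}$, this gives $|\hat\varphi(\xi)|v(\xi)\lesssim e^{(1-h)M(|\xi|)}$, and since $M(\rho)$ grows faster than $\ln\rho^p$ for every $p$ (so that $e^{-\varepsilon M(|\cdo|)}$ decays faster than any polynomial), choosing $h$ large yields $\hat\varphi\, v\in L^r(\mathbb R^d)$ for every $r\in[1,\infty]$.

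For the chain \eqref{FLqFLqlocemb}, I would take $f\in\mathcal FL^q_{(\omega)}(\mathbb R^d)$ and $\varphi\in\mathcal D^{(M_p)}(\mathbb R^d)$; then $\widehat{\varphi f}=\hat\varphi\ast\hat f$, and inserting $\omega(\xi)\lesssim v(\xi-\eta)\omega(\eta)$ under the convolution integral produces the pointwise bound $|\widehat{\varphi f}(\xi)|\omega(\xi)\lesssim\big((|\hat\varphi|v)\ast(|\hat f|\omega)\big)(\xi)$. Young's inequality $L^1\ast L^q\to L^q$ together with $\|\hat\varphi v\|_{L^1}<\infty$ then gives $\nm{\varphi f}{\mathcal FL^q_{(\omega)}}\lesssim\|\hat\varphi v\|_{L^1}\,\nm f{\mathcal FL^q_{(\omega)}}<\infty$, which is the first inclusion. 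The second inclusion is formal: every $\varphi\in\mathcal D^{(M_p)}(X)$, extended by zero, belongs to $\mathcal D^{(M_p)}(\mathbb R^d)$, so membership in $\mathcal FL^q_{(\omega),loc}(\mathbb R^d)$ is tested against a larger family of cut-offs than membership in $\mathcal FL^q_{(\omega),loc}(X)$, whence the containment.

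For \eqref{incrFLloc} the hypothesis $\omega_2\lesssim\omega_1$ is handled pointwise, so the real content is raising the Lebesgue index from $q_1$ to $q_2$. Here I would exploit that $\varphi f$ is compactly supported: using $(M.3)'$ choose $\psi\in\mathcal D^{(M_p)}(X)$ with $\psi\equiv1$ on a neighbourhood of $\supp\varphi$, so that $\varphi f=\psi(\varphi f)$ and hence $\widehat{\varphi f}=\hat\psi\ast\widehat{\varphi f}$. Transferring $\omega_1$ across the shift as above gives $|\widehat{\varphi f}(\xi)|\omega_1(\xi)\lesssim\big((|\hat\psi|v)\ast(|\widehat{\varphi f}|\omega_1)\big)(\xi)$, and Young's inequality with exponents determined by $1+\tfrac1{q_2}=\tfrac1r+\tfrac1{q_1}$ applies: the condition $q_1\le q_2$ forces $r\ge1$, while $|\hat\psi|v\in L^r$ by the first paragraph. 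This bounds $\nm{\widehat{\varphi f}\,\omega_1}{L^{q_2}}$ by $\|\hat\psi v\|_{L^r}\,\nm{\widehat{\varphi f}\,\omega_1}{L^{q_1}}<\infty$, and then $\omega_2\lesssim\omega_1$ yields $\nm{\varphi f}{\mathcal FL^{q_2}_{(\omega_2)}}<\infty$, i.e.\ $f\in\mathcal FL^{q_2}_{(\omega_2),loc}(X)$.

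The main obstacle is precisely this index-raising step: on all of $\mathbb R^d$ there is no inclusion $L^{q_1}\subseteq L^{q_2}$, so a naive norm comparison fails. The self-reproducing identity $\widehat{\varphi f}=\hat\psi\ast\widehat{\varphi f}$, valid because of compact support, is what converts the problem into a Young-type convolution estimate, and the super-polynomial decay of $\hat\psi$ guaranteed by the Paley-Wiener theorem is exactly what places the auxiliary factor in the required $L^r$. Everything else reduces to the moderateness inequality for $\omega$ and routine bookkeeping.
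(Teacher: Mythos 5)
Your proof is correct and follows essentially the same route as the paper: both inclusions rest on the Paley--Wiener decay of $\hat\varphi\, e^{M(|\cdot|)}$, the transfer of the weight across the convolution by $e^{M(\cdot)}$-moderateness, and Young's inequality (with $1/r+1/q_1=1+1/q_2$ for the index-raising step). Your explicit construction of the second cutoff $\psi\equiv 1$ near $\supp\varphi$ merely spells out the paper's reduction ``without loss of generality $f\in(\mathcal E^{(M_p)})'(X)$ and $\varphi\equiv 1$ near $\supp f$''.
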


\begin{proof}
If $f\in \mathcal FL^q_{(\omega )}(\mathbb{R}^d)$ and if $\fy  \in
\mathcal{D}^{(M_p)} (X)$, then Young's inequality gives
\begin{multline*}
\nm {\fy f}{\mathcal FL^q_{(\omega)}} = \nm {\mathcal F(\fy f)\,
\omega}{L^q} =(2\pi )^{-d/2} \nm {(\widehat \fy *\widehat f \, )\,
\omega}{L^q}
\\[1ex]
\lesssim
\nm {|\widehat \fy \, e^{M(\cdot)} |* |\widehat f\, \omega |}{L^q} \lesssim \nm {\widehat f\, \omega}{L^q} = \nm {f}{\mathcal
FL^q_{(\omega)}},
\end{multline*}
if $ \nm {\widehat \fy \,e^M(\cdot) }{L^1}$ is finite.
Since $\fy \in \mathcal{D}^{(M_p)}(X),$ from Theorem \ref{Paley-Wiener theorem} and Remark \ref{observation} it follows that
for every $ N>0$ we have
\begin{equation}\label{psiexpest}
|\widehat \fy (\xi )e^{M(\xi)} | \lesssim e^{-(N+1)M(\xi)} e^{M(\xi)} = e^{-N M(\xi)}.
\end{equation}
Therefore $ \nm {\widehat \fy e^{M(\cdot)}}{L^p} < \infty$ for every $p\in [1,\infty ]$, and \eqref{FLqFLqlocemb} is proved.

\par

It remains to prove  \eqref{incrFLloc}.
The inclusion in \eqref{incrFLloc} is clear when $ q_1 = q_2 $ and
$\omega _2\lesssim\omega _1 $. It remains
to show that $\mathcal FL^{q}_{(\omega ),loc}$ increases with respect to $q$.
Assume, without any loss of generality, that
$f\in (\mathcal E^{(M_p)})' (X)$, and that $\fy \in \mathcal{D}^{(M_p)} (\mathbb{R}^d)$ is such that $\fy \equiv 1$
in the neighborhood of $\supp f$.
Choose $p\in [1,\infty ]$ such that
$1/q_1+1/p=1/q_2+1$. Then, for a $e^{M(\cdot)}$-moderate weight $\omega$, it follows from
Young's inequality that
$$
\nm {f}{\mathcal FL^{q_2}_{(\omega)}} \lesssim \nm {(\widehat \fy
*\widehat f\, ) \omega}{L^{q_2}}
\lesssim \nm {\widehat \fy e^{M(\cdot)} }{L^p}\nm {\widehat f\omega}{L^{q_1}} =C\nm {f}{\mathcal
FL^{q_1}_{(\omega)}},
$$
for some constant $C$, and the result follows.
\end{proof}

\par

Next we extend the definition of wave-front sets of Fourier-Lebesgue type  given in \cite{JPTT2, PTT3, PTT}.

\par

Let  $\{ M_p\}$ satisfy $ (M.1) - (M.3)'$ and let  $ M(\rho) $ denote its associated function. Furthermore, let
$ q\in [1,\infty ]$, and  $\Gamma \subseteq \mathbb{R}^d\setminus 0$ be an open cone.
If $f\in (\cS^{(1)})'(\mathbb{R}^d)$ and $\omega \in \mathcal M_{M(\rho)} (\mathbb{R}^{2d})$,
then we define
\begin{equation}
|f|_{\mathcal FL^{q,\Gamma}_{(\omega )}} = |f|_{\mathcal
FL^{q,\Gamma}_{(\omega ), x}}
\equiv
\Big ( \int _{\Gamma} |\widehat f(\xi )\omega (x,\xi )|^{q}\, d\xi
\Big )^{1/q}\label{skoff1}
\end{equation}
(with obvious interpretation when $q=\infty$). We
note that $|\cdo |_{\mathcal FL^{q,\Gamma}_{(\omega ), x}}$ defines
a semi-norm on $(\cS^{(1)})'(\mathbb{R}^d)$ which might attain the value $+\infty$.
Since $\omega $ is $M(\rho)$-moderate  it follows that different $x \in \mathbb{R}^d$ gives rise to
equivalent semi-norms $ |f|_{\mathcal FL^{q,\Gamma}_{(\omega ),
x}}$, see Remark \ref{xdependence}. Furthermore, if $\Gamma =\mathbb{R}^d\setminus 0$, $f\in
\mathcal FL^{q}_{(\omega )}(\mathbb{R}^d)$ and $q<\infty$, then
$|f|_{\mathcal FL^{q,\Gamma}_{(\omega ), x}}$ agrees with the Fourier
Lebesgue norm $\nm f{\mathcal FL^{q}_{(\omega ), x}}$ of $f$.

\par

For the sake of notational convenience we set
\begin{equation} \label{notconv1}
\mathcal B=\mathcal FL^q_{(\omega )}=\mathcal FL^q_{(\omega )} (\mathbb{R}^d), \quad
\mbox{and}
\quad
|\cdo |_{\mathcal B(\Gamma )}=|\cdo |_{\mathcal
FL^{q,\Gamma}_{(\omega ), x}}.
\end{equation}
We let $\Theta _{\mathcal B}(f)=\Theta _{\mathcal FL^{q} _{(\omega
)}} (f)$ be the set of all $ \xi \in \mathbb{R}^d\setminus 0 $ such that
$|f|_{\mathcal B(\Gamma )} < \infty$, for some open conical
neighborhood $\Gamma = \Gamma_{\xi}$ of $\xi$.  We also let
$\Sigma_{\mathcal B} (f)$ be the complement of $ \Theta_{\mathcal
B} (f)$ in $\mathbb{R}^d\setminus 0 $. Then
$\Theta_{ \mathcal B} (f)$ and $\Sigma_{\mathcal
B} (f)$ are open respectively
closed subsets in $\mathbb{R}^d\setminus 0$, which are independent of
the choice of $ x \in \mathbb{R}^d$ in \eqref{skoff1}.

\par

\begin{definition}\label{wave-frontdef1}
Let there be given a sequence $\{ M_p\}$ which satisfies $ (M.1) - (M.3)'$ and let $ M(\rho) $ be its associated function.
Furthermore, let $q\in [1,\infty ]$, $\mathcal B$ be as in \eqref{notconv1}, and let
$X$ be an open subset of $\mathbb{R}^d$. If
$\omega \in \mathcal{M}_{M(\rho)} (\mathbb{R}^{2d})$,
then the wave-front set of
$f\in (\cD^{*})' (X)$,
$
\WF _{\mathcal B}(f)  \equiv  \WF _{\mathcal FL^q_{(\omega )}}(f)
$
with respect to $\mathcal B$ consists of all pairs $(x_0,\xi_0)$ in
$X\times (\mathbb{R}^d \setminus 0)$ such that
$
\xi _0 \in  \Sigma _{\mathcal B} (\fy f)
$
holds for each $\fy \in \mathcal{D}^{(M_p)} (X)$ such that $\fy (x_0)\neq 0$.
\end{definition}

\par

The set $\WF  _{\mathcal B}(f)$ is a closed set in $\mathbb{R}^d\times
(\mathbb{R}^d\setminus 0)$, since it is obvious that its complement is
open. We also note that if $ x\in \mathbb{R}^d$ is fixed and $\omega _0(\xi
)=\omega (x,\xi )$, then $\WF _{\mathcal B} (f)=\WF _{\mathcal
FL^q_{(\omega _0)}}(f)$, since $\Sigma _{\mathcal B}$ is independent
of $x$.

\par

The following theorem shows that wave-front sets with respect to
$\mathcal FL^q_{(\omega )}$ satisfy appropriate micro-local
properties. It also shows that such wave-front sets are decreasing
with respect to the parameter $q$, and increasing with respect to the
weight $\omega$.

\par

\begin{theorem}\label{wavefrontprop11}
Let there be given a sequence $\{ M_p\}$ which satisfy $ (M.1) - (M.3)'$ and let $ M(\rho) $ be its associated function.
Furthermore, let $q,r\in [1,\infty ]$, $X$ be an open set in $\mathbb{R}^d$ and
$\omega ,\vartheta \in \mathcal{M}_{M(\rho)} (\mathbb{R}^{2d})$
be such that
\begin{equation*}
r\le q,\quad \text{and}\quad \omega
(x,\xi )\lesssim \vartheta (x,\xi ).
\end{equation*}
Also let  $\mathcal B$  be as in \eqref{notconv1} and put
$ {\mathcal B _0} =\mathcal FL^r_{(\vartheta )}
(\mathbb{R}^d)$. If $f\in (\cD^{(M_p)})' (X)$ and  $\fy \in \mathcal{D}^{(M_p)}(X)$
then
\[
\WF _{ {\mathcal B} }(\fy \, f)\subseteq \WF _{\mathcal B_0}(f).
\]
\end{theorem}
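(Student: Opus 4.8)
The plan is to argue by contraposition and reduce everything to a single convolution estimate. Suppose $(x_0,\xi_0)\notin \WF_{\mathcal B_0}(f)$. Unwinding Definition \ref{wave-frontdef1}, there is a cutoff $\psi\in\mathcal D^{(M_p)}(X)$ with $\psi(x_0)\neq0$ and an open cone $\Gamma\ni\xi_0$ such that $\xi_0\in\Theta_{\mathcal B_0}(\psi f)$, i.e.
\[
|\psi f|_{\mathcal FL^{r,\Gamma}_{(\vartheta)}}=\Big(\int_\Gamma|\widehat{\psi f}(\eta)\,\vartheta(x,\eta)|^r\,d\eta\Big)^{1/r}<\infty .
\]
Since $\psi(x_0)\neq0$, the function $\psi$ is non-vanishing on a neighbourhood $U$ of $x_0$; I choose $\chi\in\mathcal D^{(M_p)}(X)$ with $\chi(x_0)\neq0$ and $\supp\chi\subset U$, and set $\theta=\chi\fy/\psi$, extended by $0$ outside $U$. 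Because the classes $\mathcal D^{(M_p)}$ are stable under division by a non-vanishing element (a consequence of $(M.2)$), we have $\theta\in\mathcal D^{(M_p)}(X)$ and $\chi\fy f=\theta\cdot(\psi f)$. Hence, to get $(x_0,\xi_0)\notin\WF_{\mathcal B}(\fy f)$ it suffices to exhibit a slightly smaller open cone $\Gamma'\ni\xi_0$ with $\overline{\Gamma'}\setminus\{0\}\subset\Gamma$ such that $|\theta\,(\psi f)|_{\mathcal FL^{q,\Gamma'}_{(\omega)}}<\infty$, since then $\xi_0\in\Theta_{\mathcal B}(\chi\fy f)$ with $\chi(x_0)\neq0$.

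Next I pass to the Fourier side. As $\psi f$ has compact support, $\widehat{\psi f}$ is smooth with $|\widehat{\psi f}(\eta)|\lesssim e^{k M(|\eta|)}$ for some $k>0$, and $\widehat{\chi\fy f}=\widehat\theta*\widehat{\psi f}$ up to a constant. By Theorem \ref{Paley-Wiener theorem} together with Remark \ref{observation}, $|\widehat\theta(\zeta)|\lesssim e^{-NM(|\zeta|)}$ for every $N>0$. Using that $\omega$ is $e^{M}$-moderate, $\omega(x,\xi)\lesssim e^{M(|\xi-\eta|)}\omega(x,\eta)$, so that
\[
|\widehat{\chi\fy f}(\xi)|\,\omega(x,\xi)\lesssim \int |\widehat\theta(\xi-\eta)|\,e^{M(|\xi-\eta|)}\,|\widehat{\psi f}(\eta)|\,\omega(x,\eta)\,d\eta .
\]
I split this integral according to $\eta\in\Gamma$ and $\eta\notin\Gamma$, restricting $\xi$ to the smaller cone $\Gamma'$, and estimate the two pieces separately.

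For the piece $\eta\in\Gamma$ I use $\omega\lesssim\vartheta$ and write the integrand as $\Phi(\xi-\eta)\,h(\eta)$ with $\Phi(\zeta)=|\widehat\theta(\zeta)|e^{M(|\zeta|)}$ and $h=\mathbf 1_\Gamma\,|\widehat{\psi f}|\,\vartheta(x,\cdot)$. By the first display $h\in L^r$, while $\Phi$ decays faster than any $e^{-NM}$ and hence lies in every $L^s$. Young's inequality with $1/s=1+1/q-1/r\in[0,1]$ (legitimate precisely because $r\le q$) gives $\Phi*h\in L^q(\mathbb R^d)$, so this contribution has finite $\mathcal FL^{q}_{(\omega)}$-mass, in particular over $\Gamma'$. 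For the piece $\eta\notin\Gamma$ I use the geometric separation of the cones: since $\overline{\Gamma'}\setminus\{0\}\subset\Gamma$ there is $c\in(0,1)$ with $|\xi-\eta|\ge c(|\xi|+|\eta|)$ whenever $\xi\in\Gamma'$, $\eta\notin\Gamma$. Then, by monotonicity of $M$ and \eqref{konveksiti}, $M(|\xi-\eta|)\ge\tfrac12\big(M(c|\xi|)+M(c|\eta|)\big)$, so $|\widehat\theta(\xi-\eta)|e^{M(|\xi-\eta|)}$ supplies a factor $e^{-\frac N2 M(c|\xi|)}e^{-\frac N2 M(c|\eta|)}$; combining this with the sub-exponential bound $|\widehat{\psi f}(\eta)|\,\omega(x,\eta)\lesssim e^{k' M(|\eta|)}$ and the comparison $M(|\eta|)\le\tfrac{3}{2c}M(c|\eta|)+C$ from \eqref{svojstvoasocirane2}, choosing $N$ large enough makes the $\eta$-integral converge and leaves a factor $e^{-\delta M(c|\xi|)}$ which is in $L^q(\Gamma')$. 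Hence $|\theta\,(\psi f)|_{\mathcal FL^{q,\Gamma'}_{(\omega)}}<\infty$, completing the contrapositive.

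The cutoff bookkeeping is routine; the real work is the $\eta\notin\Gamma$ piece, where the cone separation must be married to the associated-function inequalities of Lemma \ref{osobineasocirane} so that the rapid decay of $\widehat\theta$ genuinely dominates both the growth of $\widehat{\psi f}$ and the weight after the $M$-scalings $c|\cdot|\rightsquigarrow|\cdot|$ are absorbed. The other delicate point is the justification that $\theta=\chi\fy/\psi$ remains in $\mathcal D^{(M_p)}$, which rests on the stability of the class under division guaranteed by $(M.2)$.
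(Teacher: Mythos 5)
Your argument is correct, and its analytic core coincides with the paper's: write the localized distribution as a convolution on the Fourier side, split the $\eta$-integral into $\eta\in\Gamma$ and $\eta\notin\Gamma$, handle the cone part by Young's inequality (where $r\le q$ enters exactly through your exponent $1/s=1+1/q-1/r$), and kill the off-cone part by combining the cone separation $|\xi-\eta|\ge c(|\xi|+|\eta|)$ with the Paley--Wiener decay of the cutoff and the associated-function inequalities of Lemma \ref{osobineasocirane}. This is precisely the paper's $J_1/J_2$ decomposition leading to the estimate \eqref{cuttoff1}. Where you genuinely diverge is in the localization bookkeeping. The paper proves \eqref{cuttoff1} as a quantitative inequality valid for every compactly supported ultradistribution and every cutoff in $\mathcal D^{(M_p)}$; the wave-front inclusion then follows by testing $\fy f$ against the cutoff $\psi$ itself and using commutativity, $\psi\,(\fy f)=\fy\,(\psi f)$, so no quotient of cutoffs is ever formed. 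Your route instead constructs $\theta=\chi\fy/\psi$ and invokes stability of $\mathcal D^{(M_p)}$ under division by a non-vanishing element. That fact is true in the present setting, but it is not ``a consequence of $(M.2)$'': inverse-closedness of Denjoy--Carleman classes rests on the log-convexity condition $(M.1)$ (Rudin's theorem), or alternatively on closure under composition with analytic functions, which uses $p!\prec M_p$, available here thanks to $(M.1)$ and $(M.3)'$. So your proof stands, but it imports a nontrivial external lemma, with a misplaced attribution, that the paper's symmetric-cutoff device makes unnecessary. Two cosmetic points: the bound $M(|\xi-\eta|)\ge\tfrac12\bigl(M(c|\xi|)+M(c|\eta|)\bigr)$ needs only monotonicity of $M$ (the subadditivity \eqref{konveksiti} runs in the wrong direction for this purpose), and in the $\eta\in\Gamma$ part one should note, as you implicitly do, that $\Phi\in L^s$ for all $s$ because $e^{-NM}$ decays faster than any polynomial.
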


\par

\begin{proof}
When $M_p = p!^s$, $s>1$, we recover \cite[Theorem 2.1]{JPTT2}. 
In fact, the more general situation when $\{ M_p\}$ is an arbitrary sequence which satisfies $ (M.1) - (M.3)'$ 
can be proved by using the idea of the proof of \cite[Theorem 2.1]{JPTT2} as follows.

By the definition it is sufficient to prove
\begin{equation*}
\Sigma_{ {\mathcal B } } (\fy f) \subseteq
\Sigma_{\mathcal B_0} (f)
\end{equation*}
when  $ \fy \in  \mathcal{D}^{(M_p)}(X)$, $\vartheta=\omega$ and $f\in (\mathcal E^{(M_p)})'(\rr d)$,
since the statement only involves local assertions. For the
same reasons we may assume that $\omega (x,\xi ) =\omega (\xi )$ is independent of $x$.
We prove the assertion for $ r \in [1, \infty )$, and leave the case $ r = \infty $  to the reader.

\par

By using the idea of the proof of  \cite[Theorem 1.6.1]{R} we conclude that
if $f\in (\mathcal E^{(M_p)})'(\mathbb{R}^d)$ then
there exists $N_0>0$ such that $|\widehat f(\xi )\omega(\xi )|\lesssim e^{N_0 M(|\xi|)}$.

Choose open cones $\Gamma _1$ and $\Gamma_2$ in $\mathbb{R}^d$ such that
$\overline {\Gamma _2}\subseteq \Gamma _1$.
It is enough to prove that
for every $N>0$, there exist $C_N>0$ such that
\begin{equation} \label{cuttoff1}
|\fy f|_{ {\mathcal B} (\Gamma _2)}\le C_N \Big (|f|_{\mathcal B_0(\Gamma _1)} +
\sup _{\xi \in \mathbb{R}^d} \big ( |\widehat f(\xi )\omega(\xi )|e^{-N M(|\xi|)} \big )
\Big )
\end{equation}
when $ \overline \Gamma _2\subseteq \Gamma_1.$

\par

Since $\omega \in \mathcal{M}_{M(\rho)}  (\mathbb{R}^{d})$ by letting $F(\xi )=|\widehat f(\xi )\omega (\xi ) |$
and $\psi (\xi )=|\widehat \fy (\xi )| e^{M(|\xi| )}$ we have
\begin{multline*}
|\fy f| _{ {\mathcal B}(\Gamma _2)} = \Big (\int
_{\Gamma _2}|\mathcal F(\fy f)(\xi )\omega(\xi )|^{q}\, d\xi
\Big )^{1/q}
\\[1ex]
\lesssim\Big (\int _{\Gamma _2}\Big ( \int _{\rr {d}} \psi (\xi -\eta
)F(\eta )\, d\eta \Big )^{q}\, d\xi \Big )^{1/q} \lesssim J_1+J_2,
\end{multline*}
where
\begin{align*}
&J_1  = \Big (\int _{\Gamma _2}\Big ( \int _{\Gamma _1}\psi (\xi
-\eta )F(\eta )\, d\eta \Big )^{q}\, d\xi \Big )^{1/q},
\\[1ex]
&J_2 = \Big (\int _{\Gamma _2}\Big ( \int _{\complement \Gamma _1}\psi
(\xi -\eta )F(\eta )\, d\eta \Big )^{q}\, d\xi \Big )^{1/q}.
\end{align*}
\par

Let $q_0$ be chosen such that $1/r_0+1/r=1+1/q$, and let $\chi
_{\Gamma _1}$ be the characteristic function of $\Gamma _1$. Then
Young's inequality gives
\begin{multline*}
J_1 \le  \Big (\int _{\rr {d}} \Big ( \int _{\Gamma _1}\psi (\xi
-\eta )F(\eta )\, d\eta \Big )^{q}\, d\xi \Big )^{1/q}
\\[1ex]
=\nm {\psi *(\chi _{\Gamma _1} F)}{L^{q}} \le \nm \psi {L^{r_0}}\nm
{\chi _{\Gamma _1} F}{L^{r}} = C_\psi |f|_{\mathcal B_0(\Gamma _1)},
\end{multline*}
where $C_\psi = \nm \psi {L^{q_0}}<\infty$.

To estimate $J_2$, we note that since $\fy \in \mathcal{D}^{(M_p)}(X)$, then by Theorem \ref{Paley-Wiener theorem} it follows that
for every $ N>0$ there exist $ C_N>0 $ such that
\begin{equation}\label{psiexpest}
\psi (\xi) = |\widehat \fy (\xi )e^{M(|\xi| )} \leq  C_{{N}} e^{-(N+1)M(|\xi|)} e^{M(|\xi|)} \leq C_{{N}} e^{-N M(|\xi|)}.
\end{equation}

Furthermore,  $\overline {\Gamma _2} \subseteq \Gamma _1$ implies that
\begin{multline}\label{estimate1}
|\xi -\eta |>2c\max
(|\xi|,|\eta |)
\\[1ex]
\geq c (|\xi|+|\eta|),\qquad \xi \in
\Gamma _2,\  \eta \notin \Gamma _1
\end{multline}
holds for some constant $c>0$, since this is true when
$1=|\xi |\ge |\eta|$.
Now, a combination of  Lemma \ref{osobineasocirane}, \eqref{psiexpest} and \eqref{estimate1} 
(together with the monotone increasing property of $ M(\rho)$) implies that for every $N_1>0$ we have
\[
\psi(\xi-\eta) \lesssim C e^{-2N_1(M(|\xi|) +M(|\eta|))},
\]
which gives
\begin{multline*}
J_2 \lesssim \Big (\int _{\Gamma _2}\Big ( \int _{\complement
\Gamma _1}e^{-2N_1(M(|\xi|) +M(|\eta|))} F(\eta )\, d\eta \Big
)^{r}\, d\xi \Big )^{1/r}
\\[1ex]
\lesssim \Big (\int _{\Gamma _2}\Big ( \int _{\complement \Gamma_1}
e^{-2N_1(M(|\xi|) +M(|\eta|))} e^{N_1 M(|\eta|)}(e^{-N_1 M(|\eta|)}
F(\eta))\, d\eta \Big )^{r}\, d\xi \Big )^{1/r}
\\[1ex]
\lesssim \sup _{\eta \in \mathbb{R}^d}|e^{-N_1M(|\eta|)}  F(\eta ))|.
\end{multline*}
This implies \eqref{cuttoff1} and the proof is finished.
\end{proof}

\subsection{Comparisons to other types of wave-front sets} \label{subsec1.2}

\par

Let $\omega \in \mathcal{M}_v (\mathbb{R}^{2d})$ be moderated with
respect to the weight $v$ of a polynomial growth at infinity, and let $f
\in \mathcal{D}' (X). $ Then the wave frpont set $ \WF _{\mathcal FL^q_{(\omega
)}}(f)$ in Definition \ref{wave-frontdef1} agrees with the
wave-front set introduced in \cite[Definition 3.1]{PTT}.
Therefore, the information on regularity in the background of
wave-front sets of Fourier-Lebesgue type in Definition
\ref{wave-frontdef1} might be compared to the information
obtained from the classical wave-front sets, cf. Example 4.9 in
\cite{PTT}.

\par

Next we compare the wave-front sets introduced in Definition
\ref{wave-frontdef1} to the wave-front sets in spaces of
ultradistributions given in \cite{Ho1, P, R}.

\par

Let $ s>1$ and let $X$ be an open subset of $\mathbb{R}^d$. The ultradistribution
$f \in (\mathcal{D}^{(s)})' (X)$ ($f \in (\mathcal{D}^{\{ s \} })' (X)$)
is $(s)$-micro-regular ($\{ s \}$-micro-regular) at $ (x_0, \xi_0)$ if there exists
$\fy \in \mathcal{D}^{(s)} (X)$ ($\fy \in \mathcal{D}^{\{ s \} } (X)$) such that $\fy (x) = 1$
in a neighborhood of $x_0$ and an open cone
$\Gamma $ which contains $\xi_0$ such that
\begin{equation} \label{Rodino wf set}
|\mathcal F(\fy f)(\xi )| \lesssim  e^{-N|\xi|^{1/s}}, \quad \xi \in \Gamma ,
\end{equation}
for each $N>0$ (for some $N>0$). The $(s)$-wave-front set ($\{ s \}$-wave-front set)
of $f$, $ \WF_{(s)} (f) $ ($ \WF_{\{ s \}} (f) $) is defined
as the complement in $X \times \mathbb{R}^d \setminus 0 $ of the set of all $ (x_0, \xi_0) $ where $f$ is
$(s)$-micro-regular ($\{ s \} $-micro-regular), cf. \cite[Definition 1.7.1]{R}.

\par

The $\{ s\}$-wave-front set  $ \WF_{\{ s\} } (f) $ can be found in \cite{P} and it coincides
with certain wave-front set $ \WF_L (f) $ introduced in \cite[Chapter 8.4]{Ho1}.

\par

Next  we modify the definitions from \cite{PTT, JPTT2}.

\par

Let there be given a sequence $\{ M_p\}$ which satisfy $ (M.1) - (M.3)'$ and let $ M(\rho) $ be its associated function.
Furthermore, let $\omega _j \in \mathcal M_{M(\rho)}(\mathbb{R}^{2d})$, $q_j\in [1,\infty ]$
when $j$ belongs to some index set $J$, and let $\mathcal B$ be the array of spaces, given by
\begin{equation}\label{notconvsequences}
 (\mathcal B_j) \equiv (\mathcal B_j)_{j\in J},\quad
\text{where}\quad \mathcal B_j=\mathcal FL^{q_j}_{(\omega
_j)}=\mathcal FL^{q_j}_{(\omega _j)} (\mathbb{R}^d), \quad j \in J.
\end{equation}

\par

If  $f\in (\mathcal D^{(M_p)})'(\mathbb{R}^d)$, and $(\mathcal B_j)$ is given by
\eqref{notconvsequences}, then we let
$\Theta_{(\mathcal B_j) }^{\sup}(f)$ be the set of all $\xi \in \mathbb{R}^d\setminus 0$ such that for some $\Gamma = \Gamma _{\xi}$ and each
$j\in J$ it holds $|f|_{\mathcal B_j(\Gamma )} < \infty$. We
also let $\Theta _{(\mathcal B_j) }^{\inf}(f)$ be the set of all $
\xi \in \mathbb{R}^d\setminus 0 $ such that for some $\Gamma = \Gamma
_{\xi}$ and some $j\in J$ it holds $|f|_{\mathcal B_j(\Gamma)} <
\infty$. Finally we let $\Sigma _{(\mathcal B_j) }^{\sup} (f)$ and
$\Sigma _{(\mathcal B_j) }^{\inf} (f)$ be the complements in $\mathbb{R}^d\setminus 0 $ of $\Theta_{(\mathcal B_j) }^{\sup}(f)$ and $\Theta
_{(\mathcal B_j) }^{\inf} (f)$ respectively.

\par

\begin{definition}\label{defsuperposWF}
Let there be given a sequence $\{ M_p\}$ which satisfy $ (M.1) - (M.3)'$ and let $ M(\rho) $ be its associated function.
Furthermore, let $J$ be an index set, $q_j\in [1,\infty ]$, $\omega _j\in \mathcal
M_{M(\rho)}(\mathbb{R}^{2d})$ when $j\in J$, $(\mathcal B_j)$ be as in
\eqref{notconvsequences}, and let $X$ be an open subset of $\mathbb{R}^d$.
\begin{enumerate}
\item The wave-front set of $f\in (\mathcal D^{(M_p)})'(X)$,
of \emph{sup-type} with respect to $(\mathcal B_j) $,
$\WF ^{\, \sup} _{(\mathcal B_j) }(f) $,
 consists of all pairs
$(x_0,\xi_0)$ in $X\times (\mathbb{R}^d \setminus 0)$ such that
$
\xi _0 \in  \Sigma ^{\sup} _{(\mathcal B_j)} (\fy f)
$
holds for each $\fy \in \mathcal D^{(M_p)} (X)$ such that $\fy (x_0)\neq
0$;

\vrum

\item The wave-front set of $f\in (\mathcal D^{(M_p)})'(X)$,
of \emph{inf-type} with respect to $(\mathcal B_j)$,
$
\WF ^{\, \inf} _{(\mathcal B_j)}(f) $
consists of all pairs
$(x_0,\xi_0)$ in $X\times (\mathbb{R}^d \setminus 0)$ such that
$
\xi _0 \in  \Sigma ^{\inf} _{(\mathcal B_j)} (\fy f)
$
holds for each $\fy \in \mathcal D^{(M_p)} (X)$ such that $\fy (x_0)\neq
0$.
\end{enumerate}
\end{definition}

\par

Now we are ready to rewrite the classical Gevrey wave-front sets $ \WF_{\{ s\} } (f)$ and
$ \WF_{( s)} (f)$ in terms of  wave-front sets introduced in Definition \ref{defsuperposWF}.

\begin{proposition} \label{s-WF set} \cite{JPTT2}
Let  $ s>1$, and let $\mathcal B_j$ be the same as in \eqref{notconvsequences}
with $q _j\in [1,\infty]$ and $\omega _j(\xi)  \equiv e^{j |\xi|^{1/s}} $. Then the following is true:
\begin{enumerate}
\item if $f \in (\mathcal{D}^{\{ s \} })' (\mathbb{R}^d)$, then
$$
\WF _{(\mathcal B_j)} ^{\, \inf}(f) = \bigcap_{j>0} \WF _{\mathcal B_j} (f) =  \WF_{\{ s\} } (f)
\subseteq \WF_{(s)} (f) \text ;
$$

\vrum

\item if $f \in (\mathcal{D}^{(s)})' (\mathbb{R}^d)$, then
$$
\WF_{(s)} (f) =\bigcup_{j>0}
\WF _{\mathcal B_j} (f) \subseteq \WF _{(\mathcal B_j)} ^{\, \sup}(f).
$$
\end{enumerate}
\end{proposition}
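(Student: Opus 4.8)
The plan is to separate the purely set-theoretic content of the statement from its analytic core, to reduce the latter to a single microlocal estimate obtained by the cut-off/convolution technique of Theorem~\ref{wavefrontprop11}, and to exploit that for the Gevrey sequence $M_p=p!^s$ the associated function satisfies $M(\rho)\asymp\rho^{1/s}$, so that by Remark~\ref{observation} the scales $\{e^{j|\cdot|^{1/s}}\}_{j>0}$ and $\{e^{jM(|\cdot|)}\}_{j>0}$ are interchangeable; I freely replace $\omega_j$ by $e^{jM(|\cdot|)}$. As usual the assertions are local, so I localize to $f\in(\mathcal E^{(M_p)})'(\mathbb{R}^d)$ and use the a priori bound $|\widehat{\fy f}(\eta)|\lesssim e^{N_0M(|\eta|)}$ for some $N_0$.

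First I would dispose of the formal identities. For a fixed $g=\fy f$ the definitions give $\Theta^{\inf}_{(\mathcal B_j)}(g)=\bigcup_{j>0}\Theta_{\mathcal B_j}(g)$ (the cone and the index may be chosen jointly), hence $\Sigma^{\inf}_{(\mathcal B_j)}(g)=\bigcap_{j>0}\Sigma_{\mathcal B_j}(g)$; commuting the universal quantifiers over $\fy$ and over $j$ then yields $\WF^{\,\inf}_{(\mathcal B_j)}(f)=\bigcap_{j>0}\WF_{\mathcal B_j}(f)$. Dually, a cone witnessing $\Theta^{\sup}$ works for every $j$, so $\Theta^{\sup}_{(\mathcal B_j)}(g)\subseteq\Theta_{\mathcal B_j}(g)$, i.e.\ $\Sigma_{\mathcal B_j}(g)\subseteq\Sigma^{\sup}_{(\mathcal B_j)}(g)$ for each $j$, which gives $\bigcup_{j>0}\WF_{\mathcal B_j}(f)\subseteq\WF^{\,\sup}_{(\mathcal B_j)}(f)$. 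Finally $\WF_{\{s\}}(f)\subseteq\WF_{(s)}(f)$ is immediate, since the $(s)$-estimate \eqref{Rodino wf set} (for every $N$) trivially implies the $\{s\}$-estimate (for some $N$).

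The analytic core is the local equivalence, at a fixed $(x_0,\xi_0)$, between weighted $\mathcal FL^{q_j}$-integrability of $\widehat{\fy f}$ on a cone and the pointwise decay \eqref{Rodino wf set}. One implication is soft: if $|\mathcal F(\fy f)(\xi)|\lesssim e^{-N|\xi|^{1/s}}$ on a cone $\Gamma$, then $|\widehat{\fy f}(\xi)|e^{j|\xi|^{1/s}}\lesssim e^{-(N-j)|\xi|^{1/s}}\in L^{q_j}(\Gamma)$ whenever $j<N$; this already gives the inclusions $\bigcap_{j>0}\WF_{\mathcal B_j}(f)\subseteq\WF_{\{s\}}(f)$ and $\bigcup_{j>0}\WF_{\mathcal B_j}(f)\subseteq\WF_{(s)}(f)$. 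The reverse, extracting pointwise decay from integrability, I would run along the proof of Theorem~\ref{wavefrontprop11}. Fixing $\psi$ equal to $1$ near $x_0$ with $\supp\psi\subseteq\{\fy\neq0\}$, I write $\psi f=g\,\fy f$ with $g=\psi/\fy\in\mathcal D^{(M_p)}$ (the non-quasianalytic class is stable under division by a non-vanishing element), so $\widehat{\psi f}=\widehat g*\widehat{\fy f}$. Splitting the convolution over $\Gamma$ and $\complement\Gamma$ as for $J_1,J_2$ there, the complementary part is controlled by the cone separation \eqref{estimate1} together with the Paley--Wiener decay $|\widehat g(\zeta)|\lesssim e^{-hM(|\zeta|)}$ for every $h$ (Theorem~\ref{Paley-Wiener theorem}), which beats the $e^{N_0M}$ growth of $\widehat{\fy f}$; on $\Gamma$ I use the subadditivity $|\xi|^{1/s}\le|\xi-\eta|^{1/s}+|\eta|^{1/s}$ (valid since $1/s<1$) to factor out $e^{-N|\xi|^{1/s}}$ and bound the rest by Hölder against the finite quantity $\|\widehat{\fy f}\,\omega_j\|_{L^{q_j}(\Gamma)}$. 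This yields $|\widehat{\psi f}(\xi)|\lesssim e^{-N|\xi|^{1/s}}$ on a cone $\Gamma'\Subset\Gamma$ for any $N<j$, which for a single $j$ is exactly $\{s\}$-microregularity. Hence $\WF_{\{s\}}(f)=\bigcap_{j>0}\WF_{\mathcal B_j}(f)$, completing part (1); note that here no uniformity in $j$ is needed.

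For part (2) the same estimate must be promoted to the $(s)$-scale, and this is where I expect the genuine obstacle. Regularity for every $j$ produces, for each $N$, decay $e^{-N|\cdot|^{1/s}}$ on a cone $\Gamma'_{(N)}$ that a priori shrinks as $N\to\infty$, whereas $(s)$-microregularity demands a single cone valid for all $N$ at once (and, in passing, a single cut-off rather than an $N$-dependent one). To close this gap I would first nest the integrability cones, $\Gamma_{(1)}\supseteq\Gamma_{(2)}\supseteq\cdots$, which is legitimate since restriction to a subcone only improves integrability, and observe that the convolution estimate above delivers decay on a cone depending only on the \emph{outer} integrability cone; thus a common integrability cone for all $j$ would immediately furnish a common decay cone for all $N$. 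The remaining point—that the shrinking family can in fact be replaced by one fixed open cone—rests on the analyticity of $\widehat{\psi f}$, which is entire of finite exponential type by the Paley--Wiener theorem for compactly supported ultradistributions and whose directional decay rate is therefore too rigid to blow up along an isolated ray while staying finite nearby. As this rigidity is delicate, the safe route is to import the uniform-cone (and cut-off independence) fact from the classical theory of Gevrey wave-front sets in \cite{R, Ho1, P}, where $\WF_{(s)}$ and $\WF_{\{s\}}$ are shown to be intrinsic. Granting it, regularity for all $j$ upgrades to $(s)$-microregularity, giving $\WF_{(s)}(f)\subseteq\bigcup_{j>0}\WF_{\mathcal B_j}(f)$ and hence the equality in (2).
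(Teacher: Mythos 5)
You should first note what you are comparing against: the paper offers no proof of this proposition at all --- it is quoted verbatim from \cite{JPTT2} --- so your attempt has to stand on its own. Its sound parts do match the machinery the paper uses elsewhere: the set-theoretic identities ($\WF^{\,\inf}_{(\mathcal B_j)}=\bigcap_{j}\WF_{\mathcal B_j}$, $\bigcup_{j}\WF_{\mathcal B_j}\subseteq\WF^{\,\sup}_{(\mathcal B_j)}$, $\WF_{\{s\}}\subseteq\WF_{(s)}$) are correct, and part (1) is essentially right, because both of its directions only require data for a \emph{single} index $j$: your convolution/Paley--Wiener/H\"older scheme (the same scheme as in the proof of Theorem \ref{wavefrontprop11}) does convert weighted $L^{q_j}$-finiteness on a cone into pointwise decay of order about $j$ on a slightly smaller cone, and back. (One loose end: the cut-offs in Definition \ref{wave-frontdef1} lie in the Beurling class $\mathcal D^{(s)}$, while $\{s\}$-micro-regularity allows Roumieu cut-offs $\fy\in\mathcal D^{\{s\}}$; passing between the two needs one more application of the same convolution trick, which you gloss over but which is routine.)

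The genuine gap is exactly where you yourself locate it, in part (2), and your proposed repair does not close it. What must be shown is: if for \emph{every} $j$ there exist a cut-off $\fy_j$ with $\fy_j(x_0)\neq 0$ and a cone $\Gamma_j\ni\xi_0$ with $|\fy_j f|_{\mathcal B_j(\Gamma_j)}<\infty$, then there exist a \emph{single} $\fy$ equal to $1$ near $x_0$ and a \emph{single} cone on which $\widehat{\fy f}$ decays like $e^{-N|\xi|^{1/s}}$ for \emph{every} $N$. The sets $\{\fy_j\neq 0\}$ and the cones $\Gamma_j$ may a priori shrink to $\{x_0\}$ and to the ray through $\xi_0$ as $j\to\infty$, and no soft manipulation can reverse this: the microlocality estimate \eqref{cuttoff1} only shrinks cones and supports further, and the decomposition $\fy f=\fy_j f+(\fy-\fy_j)f$ fails because nothing is known about $f$ on $\supp(\fy-\fy_j)$. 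Your appeal to the ``intrinsic'' nature of $\WF_{(s)}$ in \cite{R,Ho1,P} conflates two different statements: those references prove independence of the \emph{choice} of cut-off (once one cut-off works, all sufficiently concentrated ones do), which is again the soft multiplication lemma; what you need is a uniformization over the order $N$, turning $N$-dependent cones and cut-offs into one pair valid for all $N$ simultaneously. That statement is not what those references establish, and it is precisely the nontrivial content of the equality $\WF_{(s)}(f)=\bigcup_{j>0}\WF_{\mathcal B_j}(f)$ --- note that the right-hand side is a union of closed sets, hence not obviously closed, while the left-hand side is closed, so the equality already encodes such a uniformization. The ``rigidity of entire functions'' gesture cannot substitute for it either, since the order-by-order hypotheses concern the distinct functions $\widehat{\fy_j f}$ rather than a single entire function. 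So part (1) of your proposal can be accepted, but part (2) remains unproved.
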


\begin{remark}
We recall that if $f\in \mathcal D'(\mathbb{R}^d)$, and $\omega _j(x,\xi ) = \eabs \xi ^{j}$ for $j\in J=\mathbf
N$, then it follows that $\WF _{(\mathcal B_j)}^{\, \sup}(f)$ in
Definition \ref{defsuperposWF} is equal to the standard wave front
set $\WF (f)$ in Chapter VIII in \cite{Ho1}.
\end{remark}

\par

\subsection{Convolution}

We finish the section by recalling that the convolution properties, valid for
standard wave-front sets of H{\"o}rmander type, also hold for the
wave-front sets of Fourier Lebesgue types, see \cite{PTT2, PTT4} for related results in the framework of
tempered distributions.
More generally, the following convolution result holds true.

\par

\begin{theorem} \label{convwavefront}
Let there be given a sequence $\{ M_p\}$ which satisfy $ (M.1) - (M.3)'$ and let $ M(\rho) $ be its associated function.
Furthermore, let $q,q_1,q_2\in [1,\infty ]$ and  let $ \omega, \omega_1,
\omega_2 \in \mathcal{M}_{M(\rho)}  (\mathbb{R}^d)$ satisfy
\begin{equation}\label{qomegarel1}
\frac 1{q_1}+\frac 1{q_2} = \frac 1{q}\quad \text{and}\quad  \omega
(\xi) \lesssim \omega_1 (\xi) \omega_2 (\xi).
\end{equation}
Then the convolution map $(f_1,f_2)\mapsto f_1*f_2$ from
$\cS^{ (1) } (\mathbb{R}^d)\times \cS^{ (1) }(\mathbb{R}^d)$ to $ \cS^{ (1) } (\mathbb{R}^d)$
extends to a
continuous mapping from $\mathcal FL^{q_1}_{(\omega_1)}(\mathbb{R}^d)\times
\mathcal FL^{q_2}_{(\omega_2)}(\mathbb{R}^d)$ to $\mathcal
FL^{q}_{(\omega)}(\mathbb{R}^d)$. This extension is unique if $q_1<\infty$ or
$q_2<\infty$.

If
$f_1\in \mathcal FL^{q_1}_{(\omega _1),loc}(\mathbb{R}^d)$, $f_2\in (\mathcal{D}^{(M_p)})'
(\mathbb{R}^d)$ and $f_1$ or $f_2$ have compact supports, then
$$
\WF _{\mathcal FL^{q}_{(\omega )}}(f_1*f_2) \subseteq \sets {(x+y,\xi
)}{x\in \supp f_1\ \text{and}\ (y,\xi )\in \WF _{\mathcal
FL^{q_2}_{(\omega _2)}}(f_2)}.
$$
\end{theorem}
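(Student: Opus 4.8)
The plan is to treat the two assertions separately. The continuity of the convolution map rests on the fact that the Fourier transform intertwines convolution and multiplication, $\widehat{f_1 * f_2} = \widehat{f_1}\,\widehat{f_2}$, so that for $f_1, f_2 \in \mathcal{S}^{(1)}(\mathbb{R}^d)$ one estimates
\[
\nm{f_1 * f_2}{\mathcal FL^q_{(\omega)}} = \nm{\widehat{f_1}\,\widehat{f_2}\,\omega}{L^q} \lesssim \nm{(\widehat{f_1}\,\omega_1)(\widehat{f_2}\,\omega_2)}{L^q},
\]
using $\omega \lesssim \omega_1\omega_2$, and then applies H\"older's inequality with exponents tied by $1/q_1 + 1/q_2 = 1/q$ to bound the right-hand side by $\nm{f_1}{\mathcal FL^{q_1}_{(\omega_1)}}\nm{f_2}{\mathcal FL^{q_2}_{(\omega_2)}}$. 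Since $\mathcal{S}^{(1)}(\mathbb{R}^d)$ is dense in $\mathcal FL^{q_j}_{(\omega_j)}(\mathbb{R}^d)$ whenever $q_j < \infty$, this bound extends the bilinear map by continuity; the extension is unique precisely when $q_1 < \infty$ or $q_2 < \infty$, because otherwise $\mathcal{S}^{(1)}$ fails to be dense in both factors, while the Fourier-multiplication description still yields a bounded map.

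For the wave-front inclusion I would follow the microlocal scheme used in the proof of Theorem \ref{wavefrontprop11}, adapting the classical H\"ormander argument. I treat the principal case that $f_1$ has compact support; the case of compactly supported $f_2$ is analogous, since then the base projection of $\WF_{\mathcal FL^{q_2}_{(\omega_2)}}(f_2)$ is already compact. Fix $(z_0, \xi_0)$ outside the right-hand side, so that $(z_0 - x, \xi_0) \notin \WF_{\mathcal FL^{q_2}_{(\omega_2)}}(f_2)$ for every $x \in \supp f_1$. The compact set $K = \{z_0 - x : x \in \supp f_1\}$ is therefore disjoint, jointly in base and direction $\xi_0$, from this wave-front set; using the definition of $\WF$ together with a finite covering of $K$, I extract a single cutoff $\chi \in \mathcal{D}^{(M_p)}$ with $\chi \equiv 1$ on a neighborhood of $K$ and an open cone $\Gamma_1 \ni \xi_0$ such that $|\chi f_2|_{\mathcal FL^{q_2,\Gamma_1}_{(\omega_2)}} < \infty$.

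The decisive localization is that for $\psi \in \mathcal{D}^{(M_p)}$ supported sufficiently near $z_0$ one has $\psi\bigl(f_1 * ((1-\chi)f_2)\bigr) = 0$: if $z$ lies near $z_0$ and $x \in \supp f_1$, then $z - x$ lies near $z_0 - x \in K$, where $1 - \chi$ vanishes, so the convolution receives no contribution. Hence $\psi(f_1 * f_2) = \psi(f_1 * (\chi f_2))$ near $z_0$, and it remains to show $\xi_0 \in \Theta_{\mathcal FL^q_{(\omega)}}(\psi(f_1*(\chi f_2)))$. Writing $G = \widehat{f_1}\,\widehat{\chi f_2}$, I estimate $\widehat{\psi(f_1*(\chi f_2))} = \widehat\psi * G$ over a subcone $\overline{\Gamma_2} \subseteq \Gamma_1$ with $\xi_0 \in \Gamma_2$, splitting the inner integral into the regions $\{\eta \in \Gamma_1\}$ and $\{\eta \notin \Gamma_1\}$ exactly as in \eqref{cuttoff1}. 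On $\Gamma_1$, H\"older's inequality (with $1/q_1+1/q_2=1/q$) shows $\chi_{\Gamma_1} G\,\omega \in L^q$, since $f_1 \in \mathcal FL^{q_1}_{(\omega_1)}$ and $\widehat{\chi f_2}\,\omega_2 \in L^{q_2}(\Gamma_1)$; combined with the Paley-Wiener decay $e^{M(|\cdot|)}|\widehat\psi| \lesssim e^{-N M(|\cdot|)}$ from Theorem \ref{Paley-Wiener theorem} and the moderateness of $\omega$, Young's inequality bounds this contribution. Off $\Gamma_1$, the cone separation $|\xi - \eta| \gtrsim |\xi| + |\eta|$ as in \eqref{estimate1}, the at-most $e^{N_0 M(|\cdot|)}$ growth of $G$ coming from the compactly supported ultradistribution $\chi f_2$, and Lemma \ref{osobineasocirane} combine to give decay $e^{-N_1(M(|\xi|)+M(|\eta|))}$ that is integrable and finite.

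The main obstacle is the uniform localization step: producing a single $\chi$ and cone $\Gamma_1$ valid over all of the compact set $K$, which requires the standard but delicate fact that the Fourier-Lebesgue wave-front condition can be realized by one cutoff on a compact base set, together with verifying the support identity $\psi(f_1 * ((1-\chi)f_2)) = 0$ with the correct quantifier order on the neighborhoods. Once this is in place, the two integral estimates are routine variants of the calculation already carried out in Theorem \ref{wavefrontprop11}.
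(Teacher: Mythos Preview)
Your approach is correct and matches what the paper does: the paper actually omits the proof entirely, referring to \cite[Lemma 2.1]{PTT2} for the continuity statement (exactly your H\"older argument on the Fourier side together with density of $\mathcal S^{(1)}$) and to \cite[Theorem 2.2]{JPTT3} for the wave-front inclusion, which is precisely the localize--split--use cone separation argument you outlined. Your identification of the uniform localization step as the main point is accurate and is handled in those references by the same compactness/covering reasoning you describe.
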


\par

The proof is omitted, since the arguments for the first part of Theorem are the same as in the proof of  \cite[Lemma 2.1]{PTT2},
taking into account that $\cS^{(1)}$ is dense in $\mathcal F L^q_{(\omega)}$ when $q<\infty$.
The second part of Theorem \ref{convwavefront} can be proved in the same way as \cite[Theorem 2.2]{JPTT3}.

\par

\section{Modulation Spaces} \label{Modulation Spaces} \label{sec3}

In this section we first recall the action of the short-time Fourier transform on
Gelfand-Shilov spaces and their dual spaces, and then proceed with modulation spaces and their properties.
Since the short-time Fourier transform gives a phase-space description of a function or distribution, we first
extend Definition \ref{GSoftypeS}.


\begin{definition} \label{GSoftypeS-2d}
Let there be given sequences of positive numbers
$ (M_p)_{p \in \mN_0} $, $ (N_q)_{q \in \mN_0} $,
$ (\tilde M_p)_{p \in \mN_0} $, $ (\tilde N_q)_{q \in \mN_0} $
which satisfy $ (M.1) $ and $ (M.2).$ We define
$ \cS^{N_{q} , \tilde N_{q} ,B} _{M_{p}, \tilde M_{p}, A} (\R^{2d}) $ to be the set of smooth functions
$ f \in C^\infty(\R^{2d}) $ such that
$$
\| x^{\alpha_1} \o^{\alpha_2}\partial^{\beta_1} _x \partial^{\beta_2} _\o f  \|_{L^\infty} \leq
C A^{|\alpha_1 + \alpha_2|} M_{|\alpha_1|} \tilde M_{|\alpha_2|}
B^{|\beta_1 + \beta_2|} N_{|\beta_1|} \tilde N_{|\beta_2|},
$$
$$
\forall \alpha_1, \alpha_2, \beta_1, \beta_2 \in \N_{0} ^d \},
$$
and for some $A, B, C >0.$ {\em Gelfand-Shilov spaces} are projective and inductive limits of
$ \cS^{N_{q} , \tilde N_{q} ,B} _{M_{p}, \tilde M_{p}, A} (\R^{2d}) $:
$$
\Sigma^{N_q, \tilde N_q} _{M_p, \tilde M_p} (\R^{2d}) : = {\rm proj} \lim_{A>0, B>0}
\cS^{N_{q} , \tilde N_{q} ,B} _{M_{p}, \tilde M_{p}, A} (\R^{2d});
$$
$$
\cS^{N_q, \tilde N_q} _{M_p, \tilde M_p} (\R^{2d})  : = {\rm ind} \lim_{A>0, B>0}
\cS^{N_{q} , \tilde N_{q} ,B} _{M_{p}, \tilde M_{p}, A} (\R^{2d}).
$$
\end{definition}

Clearly, the corresponding dual spaces are given by
$$
(\Sigma^{N_q, \tilde N_q} _{M_p, \tilde M_p})' (\R^{2d}) : = {\rm ind} \lim_{A>0, B>0}
(\cS^{N_{q} , \tilde N_{q} ,B} _{M_{p}, \tilde M_{p}, A})' (\R^{2d});
$$
$$
(\cS^{N_q, \tilde N_q} _{M_p, \tilde M_p})' (\R^{2d})  : = {\rm proj} \lim_{A>0, B>0}
(\cS^{N_{q} , \tilde N_{q} ,B} _{M_{p}, \tilde M_{p}, A})' (\R^{2d}).
$$

By Theorem \ref{GS-characterization},  the Fourier transform is a homeomorphism from
$ \Sigma^{N_q, \tilde N_q} _{M_p, \tilde M_p} (\R^{2d}) $
to $\Sigma_{N_q, \tilde N_q} ^{M_p, \tilde M_p} (\R^{2d}) $
and, if $\Fur_1 f$ denotes the partial Fourier transform of $f(x,\o)$
with respect to the $x$ variable, and if $\Fur_2 f$
denotes the partial Fourier transform of $f(x,\o)$
with respect to the $\o$ variable, then
$ \Fur_1 $ and $ \Fur_2$ are homeomorphisms from
$ \Sigma^{N_q, \tilde N_q} _{M_p, \tilde M_p} (\R^{2d}) $
to $\Sigma_{M_p, \tilde N_q} ^{N_q, \tilde M_p} (\R^{2d}) $ and
$ \Sigma^{N_q, \tilde M_p} _{M_p, \tilde N_q} (\R^{2d}) $, respectively.
Similar facts hold when $ \Sigma^{N_q, \tilde N_q} _{M_p, \tilde M_p} (\R^{2d}) $ is replaced by
$\cS^{N_q, \tilde N_q} _{M_p, \tilde M_p} (\R^{2d})$, $
(\Sigma^{N_q, \tilde N_q} _{M_p, \tilde M_p})' (\R^{2d})$ or
$ (\cS^{N_q, \tilde N_q} _{M_p, \tilde M_p})' (\R^{2d})$.

When $M_p = \tilde M_p $ and $ N_q = \tilde N_q $ we use usual abbreviated notation:
$\cS^{N_q} _{M_p}  (\R^{2d}) = \cS^{N_q, \tilde N_q} _{M_p, \tilde M_p} (\R^{2d})$
and similarly for other spaces.

\par

\subsection{Short-time Fourier transform}

Let $ (M_p)_{p \in \mN_0} $ satisfy $ (M.1) $ and  $ (M.2)$.
For any given  $f, g \in \cS ^{M_p}_{M_p} (\mathbb{R}^d )$
($f, g \in \Sigma ^{M_p}_{M_p} (\mathbb{R}^d )$, respectively) the \stft\ (STFT) of
$f $ with respect to the window $g$ is given by
\begin{equation*}
   V_g f(x,\xi)=
 (2\pi)^{-d/2} \int_{\mathbb{R}^d} f(y)\, {\overline {g(y-x)}} \, e^{-i\eabs{\xi,y}}\,dy\, .
 \end{equation*}

The following theorem (and its variations) is a folklore,
in particular in the framework of the duality between $\cS (\R^{2d})$ and $\cS^{'} (\R^{2d})$.
For Gelfand-Shilov spaces we refer to e.g.  \cite{GZ, T2, Teof2015, Toft-2012}.

\begin{theorem} \label{nec-suf-cond}
Let there be given  sequences $ ( M_p )_{p \in \mN_0} $ and
$ ( N_q )_{q \in \mN_0} $ which satisfy
(M.1), (M.2) and
$$
\{N.1\}: \;\;\;
(\exists H>0) (\exists A>0) \;\; p!^{1/2} \leq A H^p  M_{p}, \;\; p \in \mN_0.
$$
If $f,g \in \cS^{N_q} _{M_p} (\R^d)$,
then $ V_\phi f   \in \cS^{N_q, M_p} _{M_p, N_q} (\mathbb{R}^dd) $
and extends uniquely to a continuous map from
$ (\cS^{N_q} _{M_p})' (\R^d)\times  (\cS^{M_p} _{N_q})' (\R^d) $
into $ (\cS^{N_q,M_p} _{M_p, N_q})' (\R^{2d}).$

Conversely, if  $V_\phi f \in \cS^{N_q,M_p} _{M_p,N_q} (\mathbb{R}^dd) $
then $f,g \in \cS^{N_q} _{M_p} (\R^d).$

Next, assume that $ ( M_p )_{p \in \mN_0} $ and
$ ( N_q )_{q \in \mN_0} $ satisfy
(M.1), (M.2) and
$$
(N.1): \;\;
(\forall H>0) (\exists A>0) \;\; p!^{1/2} \leq A H^p  M_{p}, \;\; p \in \mN_0.
$$

If $f,g \in \Sigma^{N_q} _{M_p} (\R^d), $
then $ V_\phi f   \in \Sigma^{N_q, M_p} _{M_p, N_q}  (\mathbb{R}^d) $
and extends  uniquely to a continuous  map from $ (\Sigma^{N_q} _{M_p})' (\R^d) \times  (\Sigma^{M_p} _{N_q})' (\R^d) $
into $ (\Sigma^{N_q,M_p} _{M_p, N_q})' (\R^{2d}).$

Conversely, if  $ V_\phi f \in \Sigma^{N_q,M_p} _{M_p, N_q} (\mathbb{R}^dd)$
then $f,g \in \Sigma^{N_q} _{M_p} (\R^d).$
\end{theorem}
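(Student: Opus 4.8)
The plan is to carry out the Roumieu case (hypothesis $\{N.1\}$) in full and to obtain the Beurling case (hypothesis $(N.1)$) by the usual interchange of the ``for some constants'' and ``for every constants'' quantifiers, invoking throughout the Beurling variants of Theorem \ref{GS-characterization} and of the (partial) Fourier isomorphisms recorded after Definition \ref{GSoftypeS-2d}. I write $g$ for the window (the statement's $V_\phi f$). Two realizations of the STFT will be used: the partial-Fourier form $V_g f(x,\cdot)=\mathcal F\big(f\,\overline{g(\cdot-x)}\big)$, and the tensor form $V_g f=\mathcal F_2\,T_a(f\otimes\bar g)$, where $T_a$ is the pullback by the shear $(x,y)\mapsto(y,y-x)$ and $\mathcal F_2$ is the partial Fourier transform in the second variable. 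The analytic content of $\{N.1\}$, i.e.\ $p!^{1/2}\le AH^pM_p$, is the decay threshold $M_p\gtrsim p!^{1/2}$; together with the companion lower bound on $N_q$ it secures $p!\subset M_pN_p$, which is exactly what makes all the spaces $\cS^{N_q}_{M_p}$, $\cS^{M_p}_{N_q}$ and their $2d$ counterparts nontrivial and brings Theorem \ref{GS-characterization} and the partial-Fourier isomorphisms to bear.

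For the forward inclusion I would appeal to the $2d$ analogue of Theorem \ref{GS-characterization}(3), so that it suffices to bound moments and derivatives of $V_g f$ \emph{separately}. The derivative bound is immediate from the partial-Fourier form: $\partial_x^{\beta_1}\partial_\xi^{\beta_2}V_g f(x,\xi)$ equals, up to unimodular factors, $\mathcal F\big((-iy)^{\beta_2}f(y)\,\overline{(\partial^{\beta_1}g)(y-x)}\big)(\xi)$, whence
\[
\big|\partial_x^{\beta_1}\partial_\xi^{\beta_2}V_g f(x,\xi)\big|\lesssim \|y^{\beta_2}f\|_{L^1}\,\|\partial^{\beta_1}g\|_{L^\infty}\lesssim A^{|\beta_2|}M_{|\beta_2|}\,B^{|\beta_1|}N_{|\beta_1|},
\]
the first factor coming from the $M_p$-moment estimate for $f$ with a spare power of $\eabs y$ for integrability (recombined by $(M.2)$). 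For the moment bound on $x^{\alpha_1}\xi^{\alpha_2}V_g f$ I would move $\xi^{\alpha_2}$ onto $y$-derivatives by integrating by parts, write $x^{\alpha_1}=\sum_\gamma\binom{\alpha_1}{\gamma}(-1)^{|\alpha_1-\gamma|}y^{\gamma}(x-y)^{\alpha_1-\gamma}$ so that the two moments attach to $f$ and to $g$, and distribute the $\alpha_2$ derivatives by Leibniz onto $f$ and $g$ only; each resulting integral is $\lesssim M_{|\gamma|}M_{|\alpha_1-\gamma|}N_{|a|}N_{|\alpha_2-a|}$, which collapses to $A^{|\alpha_1+\alpha_2|}M_{|\alpha_1|}N_{|\alpha_2|}$ by the super-multiplicativity $M_iM_j\le M_{i+j}$ (and its $N$-analogue) from $(M.1)$. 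The point of splitting into two \emph{separate} estimates is that no monomial is ever differentiated against a moment index, so the factorial cross-terms that would otherwise appear are avoided.

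The extension to the duals and the converse are both read off the tensor form. Since the shear $T_a$ and the partial transform $\mathcal F_2$ are topological isomorphisms of the $2d$ Gelfand--Shilov spaces (the $\mathcal F_2$-part is the statement after Definition \ref{GSoftypeS-2d}; the $T_a$-part reduces to the convexity estimate \eqref{konveksiti} for the decay and to the chain rule plus $(M.2)$ for the regularity), the composite $\mathcal F_2 T_a$ transfers by transposition to a topological isomorphism of the corresponding dual spaces. Thus $V_g$ extends from $f\otimes\bar g$ to a continuous map on the tensor product of duals into $(\cS^{N_q,M_p}_{M_p,N_q})'(\R^{2d})$, with uniqueness following from the density of the algebraic tensor product; the index bookkeeping that places the window in $(\cS^{M_p}_{N_q})'$ is dictated by $\mathcal F_2$, which interchanges the decay and regularity sequences in the second variable. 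For the converse, $V_g f\in\cS^{N_q,M_p}_{M_p,N_q}(\R^{2d})$ gives, after applying $(\mathcal F_2 T_a)^{-1}$, that $f\otimes\bar g\in\cS^{N_q}_{M_p}(\R^{2d})$; testing the defining estimate with the second group of indices set to $0$ and using $g\neq0$ yields $\|x^{p}\partial^{q}f\|_{L^\infty}\lesssim A^{|p|}M_{|p|}B^{|q|}N_{|q|}$, i.e.\ $f\in\cS^{N_q}_{M_p}(\R^d)$, and symmetrically $g\in\cS^{N_q}_{M_p}(\R^d)$.

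The main obstacle is the claim that $\mathcal F_2 T_a$ is an isomorphism of the relevant spaces \emph{and} of their duals: while $\mathcal F_2$ is standard, the shear-invariance of the (now sequence-symmetric) space $\cS^{N_q}_{M_p}(\R^{2d})$ is the technical heart. Here one must see that replacing $(|x|,|y|)$ by $(|y|,|y-x|)$ alters the weight $e^{-M(|x|)-M(|y|)}$ only up to the admissible dilations of Lemma \ref{osobineasocirane} — this is exactly where subadditivity \eqref{konveksiti} and the dilation bounds \eqref{svojstvoasocirane2}--\eqref{svojstvoasocirane3} enter — and that these constants remain uniform after transposing to the duals. It is precisely at this step, and in guaranteeing $p!\subset M_pN_p$ for all spaces involved, that the hypothesis $\{N.1\}$ (respectively $(N.1)$ in the Beurling case) is used.
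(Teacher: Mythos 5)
First, a point of comparison: the paper offers no proof of Theorem \ref{nec-suf-cond} at all --- it is quoted as folklore with pointers to \cite{GZ, T2, Teof2015, Toft-2012} --- so your proposal can only be measured against the standard arguments in that literature, and your overall route (separate moment and derivative estimates plus the two-dimensional analogue of Theorem \ref{GS-characterization} for the test-function statement; the factorization $V_gf=\mathcal F_2T_a(f\otimes\bar g)$ for the dual extension and the converse) is exactly that route. The test-function part is sound: the derivative bound, the moment bound via integration by parts and the splitting $x^{\alpha_1}=\sum_{\gamma\le\alpha_1}\binom{\alpha_1}{\gamma}y^{\gamma}\,(-(y-x))^{\alpha_1-\gamma}$, and the collapse via $M_iM_j\le M_{i+j}$ (valid by $(M.1)$ together with $M_0=1$) all go through; and your observation that $\{N.1\}$ as stated bounds only $M_p$ from below, so that a companion bound $p!^{1/2}\lesssim H^pN_p$ must be added to secure $p!\subset M_pN_p$, correctly identifies (and patches) a defect of the statement inherited from its source, where $M_p=N_q$.

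The genuine gap is the index bookkeeping of the dual extension, at exactly the sentence you pass over quickly. Your transposition argument needs the shear $T_a$ to preserve the space in which the tensor lives; as you yourself note, shear-invariance is available for the sequence-symmetric space $\cS^{N_q,N_q}_{M_p,M_p}(\R^{2d})$, and that is where $f\otimes\bar g$ lies when \emph{both} $f$ and $g$ are taken in $(\cS^{N_q}_{M_p})'(\R^d)$. Hence what your argument actually proves is continuity on $(\cS^{N_q}_{M_p})'(\R^d)\times(\cS^{N_q}_{M_p})'(\R^d)$. The claim that $\mathcal F_2$ ``dictates'' placing the window in $(\cS^{M_p}_{N_q})'(\R^d)$ does not hold up: in the factorization, $\mathcal F_2$ acts on the sheared second variable, which is $f$'s variable $y$, not the window's variable $y-x$, so it cannot alter the space required of $g$; and if you do take $g\in(\cS^{M_p}_{N_q})'$, then $f\otimes\bar g$ lies in the dual of the asymmetric space $\cS^{N_q,M_p}_{M_p,N_q}(\R^{2d})$, which the shear does \emph{not} preserve (model case: $e^{-M(|x_1|)-N(|x_2|)}$ pulls back to a function which on the slice $y=0$ decays only like $e^{-N(|x|)}$, while membership requires $e^{-M(|x|)}$; for $M_p=p!^{s}$, $N_q=q!^{t}$ with $t>s$ this fails). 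So the chain breaks at its first link. To reach the mixed domain of the statement one needs a genuinely different representation --- for instance $V_gf(x,\xi)=\mathcal F\big[(f\otimes\bar{\hat g}\,)e^{-2\pi i y\eta}\big](\xi ,-x)$, whose tensor $f\otimes\bar{\hat g}$ is symmetric precisely when $g\in(\cS^{M_p}_{N_q})'$, but at the price of a chirp factor whose action on these spaces requires further relations between $(M_p)$ and $(N_q)$ and is not covered by Lemma \ref{osobineasocirane}. Either supply that argument, or prove (and state) the symmetric-domain version that your method does give --- which is also the version consistent with the rest of the theorem, whose hypotheses and converse place $f$ and $g$ in the same space $\cS^{N_q}_{M_p}(\R^d)$.
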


The  conditions $\{N.1\} $ and $(N.1)$ are taken from  \cite{LCP} where
they are called  {\em nontriviality conditions} for the spaces
${\mathcal S}^{M_p} _{M_p} (\R^{d})$ and $\Sigma^{M_p} _{M_p} (\R^{d})$ respectively, see also \cite{L06}.

We will also need the following proposition when proving that the wave-front sets of
Fourier-Lebesgue and modulation space types are the same. The first part is an
extension of \cite[Proposition 4.2]{CPRT1}.

\par

\begin{proposition} \label{stftestimates}
Let $\{ M_p\}$ satisfies $ (M.1) - (M.3)'$ and let $ M(\rho) $ denotes
its associated function. Then the following is true:

\begin{enumerate}
\item if $ f \in (\mathcal E ^{(M_p)})' (\mathbb{R}^d)$ and $\phi \in \cS^{(M_p)}(\mathbb{R}^d)$, then
\begin{equation}\label{rastiopadanje}
 |V_\phi f(x,\xi)|\lesssim e^{- hM(|x|)} e^{\ep M( | \xi |)} ,
 \end{equation}
for some $\ep >0$ and for every $h>0$;

\vrum
%
%
%

\item if $ f \in (\mathcal D ^{ ( M_p ) })' (\mathbb{R}^d)$ and $\phi \in \cD^{(M_p)}(\mathbb{R}^d)\setminus 0$,
then $f\in (\mathcal E ^{ ( M_p ) })' (\mathbb{R}^d)$, if and only if $\supp V_\phi f\subseteq K\times \mathbb{R}^d$ for
some compact set $K$, and then
\begin{equation}\label{rastiopadanje2}
|V_\phi f(x,\xi)|\lesssim e^{\ep M(|\xi|)},
\end{equation}
for some $\ep >0$.
%
%
\end{enumerate}
\end{proposition}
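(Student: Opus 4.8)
The plan is to prove the two parts of Proposition \ref{stftestimates} by exploiting the structure of the STFT as a product of decay/growth estimates that can be separated into the $x$ and $\xi$ variables, relying heavily on the Paley-Wiener type characterization in Theorem \ref{Paley-Wiener theorem}, the properties of the associated function collected in Lemma \ref{osobineasocirane}, and the equivalence of "some/every" estimates noted in Remark \ref{observation}.

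For part (1), the key observation is that $V_\phi f(x,\xi) = \mathcal F\big(f \cdot \overline{T_x \phi}\big)(\xi)$ up to normalization, so we are computing the Fourier transform of the compactly supported ultradistribution $f$ localized by the shifted window $\overline{\phi(\cdot - x)}$. First I would use that $\phi \in \cS^{(M_p)}(\mathbb{R}^d)$ decays like $e^{-hM(|\cdot|)}$ for every $h>0$ (Theorem \ref{GS-characterization}), which controls the $x$-dependence: as $|x|\to\infty$, the overlap between the support of $f$ (which is compact) and the support region where $T_x\phi$ is non-negligible forces a factor $e^{-hM(|x|)}$ for every $h>0$. This uses submultiplicativity/convexity of $M$ via \eqref{konveksiti} to transfer the decay of $\phi$ at the point $y-x$ into decay in $x$ when $y$ ranges over the compact support of $f$. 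For the $\xi$-growth, since $f \cdot \overline{T_x\phi}$ is a compactly supported ultradistribution of class $(M_p)$, the standard structure theorem (as invoked earlier via the proof of \cite[Theorem 1.6.1]{R}) gives that its Fourier transform grows at most like $e^{\ep M(|\xi|)}$ for some $\ep>0$, yielding \eqref{rastiopadanje}.

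For part (2), the forward direction is that if $f\in (\mathcal E^{(M_p)})'(\mathbb{R}^d)$, then $V_\phi f(x,\xi)$ in $x$ is supported in $\supp f + \supp\phi$, which is compact, so $\supp V_\phi f \subseteq K\times\mathbb{R}^d$; the estimate \eqref{rastiopadanje2} then follows immediately from part (1) by discarding the decaying factor $e^{-hM(|x|)}$ on the compact set $K$. The more substantive direction is the converse: assuming $\supp V_\phi f \subseteq K\times\mathbb{R}^d$ for some compact $K$, I would show $f$ has compact support. The plan here is to use an inversion/reconstruction formula for the STFT, writing $f$ as a superposition (an integral over $x,\xi$) of time-frequency shifts $M_\xi T_x \phi$ weighted by $V_\phi f(x,\xi)$; since the $x$-integration is confined to $K$ and each $M_\xi T_x\phi$ has support in $\{x\} + \supp\phi$, the reconstructed $f$ is supported in $K + \supp\phi$, which is compact. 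One must check this reconstruction is valid at the level of ultradistributions, which follows from Theorem \ref{nec-suf-cond} and the density/duality properties already established.

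The hard part will be the converse in part (2): making the support-from-STFT argument rigorous at the ultradistribution level requires a careful justification of the inversion formula and its convergence in the appropriate dual topology, rather than a pointwise manipulation. The growth/decay bookkeeping in part (1) is essentially routine once one commits to using Lemma \ref{osobineasocirane} and Remark \ref{observation} to freely convert between "some $h$" and "every $h$" (and similarly for $\ep$), but the interchange of the roles of the compact support of $f$ and the rapid decay of $\phi$ must be done with the convexity inequality \eqref{konveksiti} to avoid losing the uniformity in $h$ that \eqref{rastiopadanje} demands.
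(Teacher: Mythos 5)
The genuine gap is in your part (1). You derive two \emph{marginal} estimates separately --- decay $e^{-hM(|x|)}$ in $x$ from the overlap of $T_x\phi$ with the compact support of $f$, and growth $e^{\ep M(|\xi|)}$ in $\xi$ from the structure theorem applied to the compactly supported ultradistribution $f\cdot\overline{T_x\phi}$ --- and then multiply them to get \eqref{rastiopadanje}. That deduction is invalid as stated: from $|F(x,\xi)|\le C_{\xi}\,e^{-hM(|x|)}$ and $|F(x,\xi)|\le C_{x}\,e^{\ep M(|\xi|)}$ one cannot conclude $|F(x,\xi)|\lesssim e^{-hM(|x|)}e^{\ep M(|\xi|)}$ without controlling how $C_\xi$ grows and how $C_x$ decays. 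Concretely, when you invoke the structure theorem for $f\cdot\overline{T_x\phi}$, both the admissible $\ep$ and the implied constant depend on that distribution, hence on $x$; the whole content of \eqref{rastiopadanje} is that $\ep$ can be chosen independent of $x$ while the constant decays like $e^{-hM(|x|)}$ for every $h$. This can be repaired, but only by doing a single \emph{joint} estimate --- e.g.\ estimating the pairing $\langle f, M_\xi T_x\bar\phi\rangle$ through one fixed seminorm of $f$ on $\mathcal E^{(M_p)}$, the Leibniz rule, condition $(M.2)$, and the Gelfand--Shilov bounds on all derivatives of $\phi$, so that ``some $\ep$'' comes from the order of $f$ and ``every $h$'' from $\phi$ --- and that computation is precisely the heart of the proposition, not routine bookkeeping. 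The paper sidesteps this entanglement with a different device: it inserts a cutoff $\psi\in\mathcal D^{(M_p)}$ with $\psi\equiv 1$ near $\supp f$, writes $V_\phi f=V_\phi(\psi f)$, and uses the window-change inequality $|V_\phi(\psi f)(x,\xi)|\lesssim \big(|V_\psi\phi(x,\cdot)|*|\widehat f\,|\big)(\xi)$. Then all $x$-dependence sits in $V_\psi\phi$, whose joint decay $e^{-hM(|x|)-kM(|\xi|)}$ for every $h,k>0$ is the already-known test-function statement (Theorem \ref{nec-suf-cond}), while the bound $|\widehat f(\eta)|\lesssim e^{\ep M(|\eta|)}$ is $x$-independent; the subadditivity \eqref{konveksiti} of $M$ then yields \eqref{rastiopadanje} with no uniformity issue at all.

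Your part (2) is essentially the paper's own argument. The forward direction is fine, and deducing \eqref{rastiopadanje2} directly from part (1) (using $\mathcal D^{(M_p)}\subset\mathcal S^{(M_p)}$ and discarding the factor $e^{-hM(|x|)}$ on a compact set) is arguably tidier than the paper's route, which re-derives the bound by modifying a theorem of Toft. Your ``inversion formula'' argument for the converse is Moyal's identity read backwards: pairing $f$ against any $\fy\in\mathcal D^{(M_p)}$ with $\supp\fy\cap(K+\supp\phi)=\emptyset$ gives $\langle f,\fy\rangle=\nm{\phi}{L^2}^{-2}\langle V_\phi f, V_\phi\fy\rangle=0$, since $V_\phi\fy$ vanishes over $K\times\mathbb{R}^d$ where $V_\phi f$ lives; this is exactly how the paper concludes that $f$ has compact support (whence $f\in(\mathcal E^{(M_p)})'$), and the convergence/duality justification you flag as the hard part is indeed needed there too, though the paper passes over it with a citation. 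So the substantive defect to fix is the uniformity-in-$x$ issue in part (1), not part (2).
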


\begin{proof}
We only prove (1) and (3). The other statements follow by similar arguments
and are left for the reader. As before, we will use Remark \ref{observation} in our calculations.
Recall,  $ f \in (\mathcal E ^{(M_p)})' (\rr d)$ implies that
\[|\widehat{f}(\xi)|\lesssim e^{\ep M( |\xi|)},
\]
for some $\ep > 0,$ cf. \cite[Theorem 1.6.1]{R}.

For $\phi \in \cS^{(M_p)}(\rr d)$ and
$\psi\in \cD^{(M_p)}(\rr d)$ such that  $\psi=1$ in $\supp f$ by Theorem \ref{nec-suf-cond}, Lemma \ref{osobineasocirane}
and Remark \ref{observation}
it follows that
\[
|V_\psi \phi(x,\xi)| \lesssim e^{-h M(|x|)- k M( |\xi |)},
\]
for every $ h,k >0$. Now straight-forward calculations give
\begin{multline*}
|V_{\phi} f(x,\xi)| = |(V_{\phi} (\psi f))(x,\xi)| \lesssim (|V_{\psi}\phi(x,\cdot)|* |\widehat{f}|)(\xi)
\\[1ex]
= \int |V_{\psi}\phi (x,\xi - \eta )||
\widehat{f}(\eta )|\, d\eta \lesssim  \int e^{-h M(|x|)-2\ep M( |\xi-\eta |)} e^{\ep M(|\eta |)}\, d\eta
\\[1ex]
\leq  e^{-h M( |x|)} \int e^{-2\ep M( |\eta |)+2\ep M( |\xi|)+\ep M( |\eta |)}\, d\eta
\lesssim
e^{-h M( |x|)+2\ep M( |\xi|)},
\end{multline*}
and (1) follows.

\par

Next we prove (3). First assume that $\phi \in \cD^{(M_p)}(\rr d)\setminus 0$ and $f\in (\mathcal
E^{(M_p)})'(\rr d)$. Since both $\phi$ and $f$ have compact support, it follows that
$\supp (V_\phi f)\subseteq K\times \rr d$. Furthermore, by slightly  modifying  the proof of \cite[Theorem 2.5]{Toft-2012}
we conclude thay
\[
|V_\phi f(x,\xi)|\lesssim e^{\ep( M(|x|)+ M(|\xi|))},
\]
for some $\ep >0$, see also \cite[Proposition 3.2]{JPTT2}.
Since $V_\phi f(x,\xi)$ has compact support in the $x$-variable, it follows that
\[
|V_\phi f(x,\xi)|\lesssim e^{\ep M(|\xi|)}.
\]

\par

For the opposite direction, assume that $\supp V_\phi f\subseteq K\times \rr d$,
for some compact set $K$. 
Assume that $\supp \phi \subseteq K$ and choose $\fy \in \cD^{(s)}(\rr d)$ such that
$\supp \fy \cap 2K =\emptyset$. Then
\[
(f, \fy)= (\nm \phi {L^2})^{-2}(V_\phi f, V_\phi \fy)=0,
\]
which implies that $f$ has compact support. Here the first equality is the
Moyal's identity (cf. \cite{Gro-book}). This implies that $f$ has compact support and the condition 
$f\in (\cD^{(M_p)})'(\rr d)$ now gives $f\in (\mathcal E^{(M_p)})'(\rr d)$.
\end{proof}

\subsection{Modulation spaces}
The modulation space norms traditionally  measure
the joint time-frequency distribution of $f\in \sch '$,
we refer, for instance, to \cite{Fe4}, \cite[Ch.~11-13]{Gro-book} and
the original literature quoted there for various properties and applications.
It is usually sufficient to observe modulation spaces with weights which admit at most polynomial growth at infinity.
However the study of ultra-distributions requires a more general approach that includes the weights of exponential or even superexponential growth, cf. \cite{CPRT1, Toft-2017}.
Note that the general approach introduced already in  \cite{Fe4}
includes the weights of sub-exponential growth.
We refer to \cite{FG1, FG2}
for related but even more general constructions, based on the general theory of coorbit spaces.

Depending on the
growth of the weight function $m$, different Gelfand-Shilov
classes may be chosen as fitting test function spaces for \modsp
s, see \cite{CPRT1,T2,Toft-2017}. The widest class of weights allowing to
define \modsp s is the weight class $\cN$. A weight function  $m$
on $\mathbb{R}^d$ belongs to $\cN$  if it is a continuous, positive
function such that
\begin{equation}\label{s12}
m(z)=o(e^{c z^2}),\,\quad\mbox{for}\,\,|z|\rightarrow\infty,\quad
\forall c>0,
\end{equation}
with $z\in\mathbb{R}^d$. For instance, every function $m(z)=e^{s |z|^b}$,
with $s>0$ and $0\leq b<2$,  is in $\cN$. Thus, the weight $m$ may
grow faster than exponentially at infinity. For example, the choice $ m \in \cN \setminus \cup_v \cM _v$,
when the weights $v$ satisfy the Beurling-Domar condition from Introduction, is related to the spaces of  quasianalytic functions, \cite{CPRT2}.
We notice that  there
is a limit in enlarging the weight class for \modsp s, imposed by
Hardy's theorem:
if $m(z)\geq C e^{c z^2}$, for some $c>\pi/2$, then the corresponding \modsp s  are trivial \cite{GZ01}.

\begin{definition}\label{defmodnorm}
Let $m\in \cN$, and $g$ a non-zero \emph{window} function in $\cS^{1/2}_{1/2}(\mathbb{R}^d)$. For
$1\leq p,q\leq \infty$ the {\it  modulation space} $M^{p,q}_m(\Ren)$ consists of all
$f\in (\cS^{1/2}_{1/2})' (\mathbb{R}^d)$
such that $V_gf\in L^{p,q}_m(\Renn )$
(weighted mixed-norm spaces). The norm on $M^{p,q}_m$ is
$$
\|f\|_{M^{p,q}_m}=\|V_{g}f\|_{L^{p,q}_m}=\left(\int_{\Ren}
  \left(\int_{\Ren}|V_{g} f(x,\o)|^pm(x,\o)^p\,
    dx\right)^{q/p}d\o\right)^{1/q}
$$
(with obvious changes if either $p=\infty$ or $q=\infty$). If
$p,q< \infty $, the \modsp\ $\Mmpq $ is the norm completion of
$\cS^{1/2}_{1/2}$ in the $\Mmpq $-norm. If $p=\infty $ or
$q=\infty$, then $\Mmpq $ is the completion of $\cS^{1/2}_{1/2}$
in the weak$^*$ topology.
\end{definition}

When $f,g \in \cS^{(1)} (\mathbb{R}^d )$, the above integral is convergent thanks to Theorem \ref{nec-suf-cond}.
Namely, for a given $ m\in\cM _v$ there exist $l>0$ such that
$ m (x,\o) \leq C e^{l \|\phas\|}$ and therefore
\begin{eqnarray*}
&& \left|\int_{\Ren}
  \left ( \int_{\Ren}|V_gf(x,\o)|^p m(x,\o)^p\,
    dx\right)^{q/p}d\o\right|\\
   && \quad\quad\quad\quad
     \leq
    C \left|\int_{\Ren}
  \left( \int_{\Ren}|V_gf(x,\o)|^p e^{l p\|\phas\|}\,
    dx\right) ^{q/p}  d\o\right| < \infty
\end{eqnarray*}
since by  Theorems \ref{nec-suf-cond} and Theorem \ref{GS-characterization}
we have $ |V_gf(x,\o)| < C e^{-s \|\phas\| } $ for every $s > 0.$
This implies $ \cS ^{(1)} \subset M^{p,q}_m.$

In particular, when $m$ is a polynomial weight of the form $m (x, \omega) = \langle  x \rangle ^t
\langle \omega \rangle ^s$ we will use the notation
$M^{p,q}_{s,t}(\mathbb{R}^d)$ for the modulation spaces which consists of all
$f\in \mathcal{S}'(\mathbb{R}^d)$ such that
$$
\| f \|_{M^{p,q}_{s,t}} \equiv \left  ( \int _{\mathbb{R}^d} \left ( \int _{\mathbb{R}^d}
|V_\phi f(x,\omega )\langle  x \rangle ^t
\langle \omega \rangle ^s|^p\, dx  \right )^{q/p}d\omega  \right )^{1/q}<\infty
$$
(with obvious interpretation of the integrals when $p=\infty$ or $q=\infty$).

If $p=q$, we write $M^p_m$ instead of $M^{p,p}_m$, and if $m(z)\equiv 1$ on $\Renn$, then we write $M^{p,q}$ and $M^p$ for $M^{p,q}_m$ and $M^{p,p}_m$, and so on.

In the next proposition we show that  $\Mmpq (\Ren )$ are  Banach spaces
whose definition is independent of the choice of the window
$g \in M^1_{v} \setminus \{ 0\}$.
In order to do so, we need the adjoint of the short-time Fourier transform.

For given window $ g \in  \mathcal{S} ^{(1)} $ and a
function $ F (x,\xi) \in L^{p,q} _m (\R^{2d})$ we (formally) define $ V^* _g F $ by
$$
\langle V^* _g F, f \rangle := \langle F, V_g f \rangle.
$$

\begin{proposition} \label{emjedanve} Let $v$ be a submultiplicative weight.
Fix $m \in  \cM _v$ and $ g, \psi \in  \mathcal{S} ^{(1)},$ with $\langle g, \psi \rangle\not= 0$. Then
\begin{enumerate}
\item $ V^* _g :  L^{p,q} _m (\R^{2d}) \rightarrow  M^{p,q} _m (\R^{d}), $ and
\begin{equation}  \label{vstar}
\|  V^* _g F  \|_{ M^{p,q} _m } \leq C \| V_\psi g \|_{ L^{1} _v }   \| F \|_{L^{p,q} _m}.
\end{equation}
\item The inversion formula holds: $ I_{ M^{p,q} _m }  = \langle g, \psi \rangle^{-1}
  V^* _g   V _\psi,$ where  $ I_{ M^{p,q} _m } $ stands for the identity operator.
\item   $\Mmpq (\Ren )$ are  Banach spaces
whose definition is independent on the choice of
$ g \in \mathcal{S} ^{(1)} \setminus \{ 0 \} $.
\item The space of admissible windows can be extended from $ {\mathcal S} ^{(1)} $ to $M^1 _v .$
\end{enumerate}
\end{proposition}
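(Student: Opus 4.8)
The plan is to run the standard time-frequency machinery that identifies $\Mmpq$ with a retract of the mixed-norm space $L^{p,q}_m$ through the STFT, adapted to the Gelfand--Shilov setting of Section \ref{sec3}. Two tools drive every step. The first is a pointwise \emph{kernel estimate}: writing the time-frequency shift $\pi(z)=M_\o T_x$ for $z=\phas$, the covariance of the STFT gives $|\scal{\pi(z)g}{\pi(w)\psi}|=|V_\psi g(w-z)|$ for $z,w\in\rdd$, so that
\[
|V_\psi(V^*_g F)(w)|=\Big|\int_{\rdd} F(z)\,\scal{\pi(z)g}{\pi(w)\psi}\,dz\Big|\le (|F|*|V_\psi g|)(w).
\]
The second is the \emph{weighted Young inequality} $\|H*K\|_{L^{p,q}_m}\le \|H\|_{L^{p,q}_m}\|K\|_{L^1_v}$, which holds because $m\in\Mv$ is $v$-moderate. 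Taking $\psi$ as the fixed window defining the $\Mmpq$-norm and combining these two facts yields at once
\[
\|V^*_g F\|_{\Mmpq}=\|V_\psi(V^*_g F)\|_{L^{p,q}_m}\le \|V_\psi g\|_{L^1_v}\|F\|_{L^{p,q}_m},
\]
which is \eqref{vstar} in (1); here $\|V_\psi g\|_{L^1_v}$ is finite since, by Theorems \ref{nec-suf-cond} and \ref{GS-characterization}, $V_\psi g$ decays like $e^{-s(|x|+|\o|)}$ for every $s>0$ and hence dominates the exponentially bounded $v\le Ce^{k|\cdo|}$.

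For the inversion formula (2) I would first check it weakly on the dense test space. For $f,h\in\cS^{(1)}$, unfolding the adjoint and invoking Moyal's identity (already used in the proof of Proposition \ref{stftestimates}) gives
\[
\scal{V^*_g V_\psi f}{h}=\scal{V_\psi f}{V_g h}=\scal{f}{h}\,\overline{\scal{\psi}{g}}=\scal{g}{\psi}\,\scal{f}{h},
\]
so $V^*_g V_\psi f=\scal{g}{\psi}\,f$ on $\cS^{(1)}$. Since $V_\psi$ maps $\Mmpq$ isometrically into $L^{p,q}_m$ by the definition of the norm and $V^*_g$ is bounded by (1), this identity extends to all of $\Mmpq$ by density (in the weak$^*$ sense when $p=\infty$ or $q=\infty$), which proves (2).

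The last two parts then follow. For (3), the operator $P=\scal{\psi}{\psi}^{-1}V_\psi V^*_\psi$ is bounded and, by the inversion formula $V^*_\psi V_\psi=\scal{\psi}{\psi}\,I$, idempotent on $L^{p,q}_m$; hence its range is closed, and since $V_\psi$ maps $\Mmpq$ isometrically onto this range, $\Mmpq$ inherits completeness from $L^{p,q}_m$. Window independence is obtained by feeding the inversion formula into $V_{\phi_1}f$ and applying the kernel bound, which yields $|V_{\phi_1}f(w)|\le \scal{\psi}{\psi}^{-1}(|V_\psi f|*|V_{\phi_1}\psi|)(w)$ and therefore $\|V_{\phi_1}f\|_{L^{p,q}_m}\lesssim \|V_{\phi_1}\psi\|_{L^1_v}\,\|V_\psi f\|_{L^{p,q}_m}$; by symmetry the norms attached to any two windows in $\cS^{(1)}\setminus 0$ are equivalent. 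For (4), the crucial observation is that every estimate above involves the window $g$ only through $\|V_\psi g\|_{L^1_v}=\|g\|_{M^1_v}$, which is exactly the quantity that is finite for $g\in M^1_v$; the same change-of-window inequality then gives $\|V_g f\|_{L^{p,q}_m}\asymp\|V_\psi f\|_{L^{p,q}_m}$, so windows in $M^1_v$ define the same space with an equivalent norm.

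I expect the main obstacle to be the rigorous justification of the weak manipulations in the dual Gelfand--Shilov setting: the very definition of $V^*_g F$ as an element of $(\cS^{(1)})'$, the interchange of integration in the kernel identity, and the applicability of Moyal's identity all have to be reduced to the dense subspace $\cS^{(1)}$, where Theorem \ref{nec-suf-cond} guarantees absolute convergence. Once this bookkeeping is secured, the pointwise kernel estimate together with the $v$-moderateness of $m$ (which is what lets the weight pass through the convolution) renders the remaining computations routine.
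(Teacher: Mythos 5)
Your parts (1)--(3) follow exactly the route the paper itself relies on: the paper does not write the argument out but refers to \cite{CPRT1}, whose proof is the one from \cite[Proposition 11.3.2]{Gro-book} that you reconstruct --- the covariance/kernel estimate $|V_\psi(V^*_g F)|\le |F|*|V_\psi g|$, the weighted Young inequality for $v$-moderate $m$, Moyal's identity on the dense subspace $\cS^{(1)}$ (where Theorem \ref{nec-suf-cond} gives absolute convergence), and the bounded-idempotent/retract argument for completeness and window independence. Up to there your proposal is correct and is essentially the paper's proof.

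The genuine gap is in part (4). You assert that the extension of the window class to $M^1_v$ is automatic ``because every estimate involves $g$ only through $\|V_\psi g\|_{L^1_v}=\|g\|_{M^1_v}$''. But the estimates you derived are only meaningful when $g$ is a test function: for $g\in M^1_v\setminus \mathcal{S}^{(1)}$ you cannot write the kernel identity $V_\psi(V^*_gF)(w)=\int F(z)\,\langle \pi(z)g,\pi(w)\psi\rangle\,dz$, nor invoke Moyal's identity, nor even make sense of $V_gf$ for $f$ in the ambient dual space, without first approximating $g$ in the $M^1_v$-norm by functions in $\mathcal{S}^{(1)}$ and passing to the limit in the change-of-window inequalities (this is also what is needed to extend those inequalities from $f\in\mathcal{S}^{(1)}$ to general $f\in M^{p,q}_m$). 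That approximation is precisely the density of $\mathcal{S}^{(1)}$ in $M^1_v$ and in $M^{p,q}_m$, and in this exponential-weight Gelfand--Shilov setting it is a non-trivial theorem: the paper explicitly flags it as essential and ``not obvious'', citing \cite{Elena07} for its proof, and only then proceeds by the standard limiting argument of \cite[Theorem 11.3.7]{Gro-book}. Your closing paragraph acknowledges the bookkeeping needed on $\mathcal{S}^{(1)}$ itself, but it neither supplies nor even names this density result, and without it part (4) does not follow.
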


\begin{proof} We refer to \cite{CPRT1} for the proof which is based on the proof of
\cite[Proposition 11.3.2.]{Gro-book}. Note that in  (4)  the density of $ \cS ^{(1)} $ in $ M^{p,q}_m$
is essential. This fact is not obvious, and we refer to  \cite{Elena07} for the proof.
Then we may proceed by using the standard arguments, cf. \cite[Theorem 11.3.7]{Gro-book}.
\end{proof}

The following theorem lists some basic properties of modulation spaces.
We refer to \cite{Fe4, Gro-book, GZ, PT1, T3, Toft-2012} for the proof.

\begin{theorem} \label{modproerties}
Let $p,q,p_j,q_j\in [1,\infty ]$ and $s,t,s_j,t_j\in \mathbb{R}$, $j=1,2$. Then:
\begin{enumerate}
\item $M^{p,q}_{s,t}(\mathbb{R}^d)$ are Banach spaces, independent of the choice of
$\phi \in \mathcal{S}(\mathbb{R}^d) \setminus 0$;

\item if  $p_1\le p_2$, $q_1\le q_2$, $s_2\le s_1$ and
$t_2\le t_1$, then
$$
\mathcal{S}(\mathbb{R}^d)\subseteq M^{p_1,q_1}_{s_1,t_1}(\mathbb{R}^d)
\subseteq M^{p_2,q_2}_{s_2,t_2}(\mathbb{R}^d)\subseteq
\mathcal{ S}'(\mathbb{R}^d);
$$

\item $ \displaystyle
\cap _{s,t} M^{p,q}_{s,t}(\mathbb{R}^d)=\mathcal{ S}(\mathbb{R}^d),
\quad
\cup _{s,t}M^{p,q}_{s,t}(\mathbb{R}^d)=\mathcal{ S}'(\mathbb{R}^d); $
\item Let $ 1\leq p, q \leq  \infty, $ and let $ w_s (x,\omega) = e^{s \|(x,\omega) \|},$
$x,\omega \in \mathbb{R}^d.$
Then
$$
\Sigma_1 ^1 (\mathbb{R}^d)=  {\mathcal S}^{(1)} (\mathbb{R}^d)=  \bigcap _{s \geq  0} M _{w_{s}} ^{p,q} (\mathbb{R}^d),\;\;\;
(\Sigma_1 ^1)' (\mathbb{R}^d)
= \bigcup _{s \geq 0} M _{1/w_{s}} ^{p,q} (\mathbb{R}^d),
$$
$$
{\mathcal S}_1 ^{1} (\mathbb{R}^d) = {\mathcal S}^{\{1\}} (\mathbb{R}^d) =  \bigcup _{s >  0} M _{w_{s}} ^{p,q} (\mathbb{R}^d),\;\;\;
({\mathcal S}_1 ^{1})' (\mathbb{R}^d)
= \bigcap _{s > 0} M _{1/w_{s}} ^{p,q} (\mathbb{R}^d).
$$
\item For   $p,q\in [1,\infty )$, the dual of $ M^{p,q}_{s,t}(\mathbb{R}^d)$ is
$ M^{p',q'}_{-s,-t}(\mathbb{R}^d),$ where $ \frac{1}{p} +  \frac{1}{p'} $ $ =
 \frac{1}{q} +  \frac{1}{q'} $ $ =1.$
\end{enumerate}
\end{theorem}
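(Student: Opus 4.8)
The plan is to reduce every assertion to a statement about the short-time Fourier transform, so that the theorem becomes a transcription, through the map $V_g$, of elementary facts about the weighted mixed-norm spaces $L^{p,q}_m$. The key observation is that each polynomial weight $m(x,\omega)=\langle x\rangle^t\langle\omega\rangle^s$ is $v$-moderate for the submultiplicative weight $v(x,\omega)=\langle x\rangle^{|t|}\langle\omega\rangle^{|s|}$, so $m\in\mathcal M_v$ and Proposition \ref{emjedanve} applies verbatim. Throughout, the two workhorses are the inversion formula $I=\langle g,\psi\rangle^{-1}V_g^*V_\psi$ and the boundedness estimate \eqref{vstar} for $V_g^*$, supplemented by the pointwise change-of-window bound $|V_\phi f|\le\langle g,\psi\rangle^{-1}(|V_\psi f|*|V_\phi g|)$.

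For (1), I would simply invoke Proposition \ref{emjedanve}: part (3) already gives that $M^{p,q}_{s,t}$ is a Banach space whose definition is independent of the window chosen in $\mathcal S^{(1)}\setminus\{0\}$, and part (4) enlarges the admissible window class to $M^1_v$. Since for a polynomial weight $v$ one has the embedding $\mathcal S\subseteq M^1_v$, every $\phi\in\mathcal S\setminus\{0\}$ is an admissible window, which is exactly the independence claimed in (1). For (2), the Schwartz embedding $\mathcal S\subseteq M^{p_1,q_1}_{s_1,t_1}$ holds because $V_\phi f$ is a Schwartz function on $\mathbb R^{2d}$ whenever $f,\phi\in\mathcal S$, so all weighted mixed norms are finite. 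The middle inclusion rests on monotonicity: the hypotheses $s_2\le s_1$, $t_2\le t_1$ give $\langle x\rangle^{t_2}\langle\omega\rangle^{s_2}\lesssim\langle x\rangle^{t_1}\langle\omega\rangle^{s_1}$, and combining this weight domination with the reproducing identity $V_\phi f=\langle\phi,\phi\rangle^{-1}V_\phi f*V_\phi\phi$ and a mixed-norm Young inequality upgrades it to $M^{p_1,q_1}_{s_1,t_1}\subseteq M^{p_2,q_2}_{s_2,t_2}$ for $p_1\le p_2$, $q_1\le q_2$. The embedding into $\mathcal S'$ follows because polynomial control of $V_\phi f$ suffices to define a tempered distribution via the pairing through $V_\phi^*$.

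For (3), the inclusions $\mathcal S\subseteq\bigcap_{s,t}M^{p,q}_{s,t}$ and $\bigcup_{s,t}M^{p,q}_{s,t}\subseteq\mathcal S'$ are immediate from (2); the reverse inclusions are the STFT characterizations of $\mathcal S$ and $\mathcal S'$, namely that membership in every $M^{p,q}_{s,t}$ forces rapid decay of $V_\phi f$ in both variables (hence $f\in\mathcal S$), while membership in some $M^{p,q}_{s,t}$ allows at most polynomial growth (hence $f\in\mathcal S'$). Part (4) is the exact analogue with the exponential weights $w_s$ replacing the polynomial ones: by Theorem \ref{nec-suf-cond} together with the Gelfand--Shilov characterization of Theorem \ref{GS-characterization}, one has $f\in\mathcal S^{(1)}$ precisely when $|V_\phi f(x,\omega)|\lesssim e^{-s\|(x,\omega)\|}$ for every $s>0$, and intersecting or unioning the spaces $M^{p,q}_{w_s}$, respectively $M^{p,q}_{1/w_s}$, over $s$ transcribes this into the four stated identities. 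Finally, for (5) I would realize $M^{p,q}_{s,t}$ through $V_g$ as a complemented subspace of $L^{p,q}_m$ via the Gabor projection built from $V_gV_g^*$; since the dual of $L^{p,q}_m$ is $L^{p',q'}_{1/m}$ for $p,q<\infty$, transporting the pairing $\langle f,h\rangle=\langle V_gf,V_gh\rangle$ back by $V_g^*$ identifies $(M^{p,q}_{s,t})'$ with $M^{p',q'}_{-s,-t}$.

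The step I expect to demand the most care is the sharp inclusion in (2) and the window enlargement behind (1): both reduce to mixed-norm Young inequalities against a $v$-moderate kernel, and the passage from the pointwise convolution bound for $|V_\phi f|$ to a genuine norm inequality must be carried out with the submultiplicativity of $v$ made explicit, so that no constant secretly depends on the growth of $m$. The duality (5) is delicate only in its density/completeness input: one must ensure that $\mathcal S$ (equivalently $\mathcal S^{(1)}$) is dense in $M^{p,q}_{s,t}$ when $p,q<\infty$, so that continuous functionals are determined by their values on test functions and the representation by $M^{p',q'}_{-s,-t}$ is unique.
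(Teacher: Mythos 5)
The paper does not actually prove this theorem---it simply defers to the cited literature \cite{Fe4, Gro-book, GZ, PT1, T3, Toft-2012}---and your sketch correctly reconstructs the standard arguments from exactly those references, using the same toolkit the paper itself sets up (the inversion formula and adjoint bound of Proposition \ref{emjedanve}, the STFT mapping properties of Theorem \ref{nec-suf-cond}, and the characterization in Theorem \ref{GS-characterization}), so your route is essentially the intended one. The only step worth making explicit is in (3) and (4): passing from an $M^{\infty ,\infty}$-type bound on $V_\phi f$ back into $M^{p,q}$ with \emph{finite} $p,q$ is not covered by the monotonicity of part (2) (which goes the other way), but requires sacrificing a polynomial (resp.\ exponential) amount of weight so that the surplus decay of $V_\phi f$ becomes integrable in the mixed norm.
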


\begin{remark} \label{GSandmod}
In the context of quasianalytic Gelfand-Shilov spaces, we recall (a special case of) \cite[Theorem 3.9]{Toft-2012}:
Let $s,t> 1/2$ and set
$$
w_h (x, \omega) \equiv e^{h (|x|^{1/t} + |\omega|^{1/s})}, \;\;\; h>0, \; x,\omega \in \mathbb{R}^d.
$$
Then
$$
\Sigma_t ^s (\mathbb{R}^d)=  \bigcap _{h>  0} M _{w_{h}} ^{p,q} (\mathbb{R}^d),\;\;\;
(\Sigma_t ^s)' (\mathbb{R}^d) =  \bigcup _{h>0} M _{1/w_{h}} ^{p,q} (\mathbb{R}^d),
$$
$$
{\mathcal S}_t ^{s} (\mathbb{R}^d)
=  \bigcup _{h >  0} M _{w_{h}} ^{p,q} (\mathbb{R}^d),\;\;\;
({\mathcal S}_t ^{s})' (\mathbb{R}^d)
= \bigcap _{h > 0} M _{1/w_{h}} ^{p,q} (\mathbb{R}^d).
$$
\end{remark}

Modulation spaces include the following well-know function spaces:
\begin{enumerate}
\item $ M^2 (\mathbb{R}^d) = L^2 (\mathbb{R}^d),$  and $ M^2 _{t,0}(\mathbb{R}^d) = L^2 _t (\mathbb{R}^d);$
\item The Feichtinger algebra: $ M^1 (\mathbb{R}^d) = S_0 (\mathbb{R}^d);$
\item Sobolev spaces: $ M^2 _{0,s}(\mathbb{R}^d) = H^2 _s (\mathbb{R}^d) = \{ f \, | \,
\hat f (\omega) \langle \omega \rangle ^s \in  L^2 (\mathbb{R}^d)\};$
\item Shubin spaces: $ M^2 _{s}(\mathbb{R}^d) = L^2 _s (\mathbb{R}^d) \cap H^2 _s (\mathbb{R}^d) = Q_s (\mathbb{R}^d),$
cf. \cite{Shubin91}.
\end{enumerate}


\section{The invariance property of Wave-front sets} \label{sec4}

\par

Next we define wave-front sets with respect to modulation spaces and show that they
agree with corresponding wave-front sets of Fourier Lebesgue types. More precisely, we
prove that \cite[Theorem 6.1]{PTT} holds if the weights of polynomial growth
are replaced by more general submultiplicative weights.

\par

Let there be given a sequence $\{ M_p\}$ which satisfies $ (M.1) - (M.3)'$ and let  $ M(\rho) $ denote its associated function. Furthermore, let
$ p,q\in [1,\infty ]$, and  $\Gamma \subseteq \mathbb{R}^d\setminus 0$ be an open cone.
If $f\in (\cS^{(1)})'(\mathbb{R}^d)$ and $\omega \in \mathcal M_{M(\rho)} (\mathbb{R}^{2d})$,
then we define

\begin{multline}\label{modseminorm}
|f|_{\mathcal B(\Gamma )} = |f|_{\mathcal B(\phi ,\Gamma )}
\equiv
\Big ( \int _{\Gamma} \Big ( \int _{\mathbb{R}^{d}} | V_\phi f(x,\xi )\omega
(x,\xi )|^p\, dx\Big )^{q/p}\, d\xi \Big )^{1/q}
\\[1ex]
\text{when}\quad \mathcal B=M^{p,q}_{(\omega )}=M^{p,q}_{(\omega
)}(\mathbb{R}^d).
\end{multline}
We note that
$|f|_{\mathcal B(\Gamma )} = \nm
f{M^{p,q}_{(\omega )}}$ when $\Gamma
=\mathbb{R}^d\setminus 0$ and $\phi \in \mathcal{S} ^{(s)} (\mathbb{R}^d)$,
and that $ |f|_{\mathcal B(\phi ,\Gamma )}$ might attain $+\infty$.

\par

Furthermore, when $\mathcal B=M^{p,q}_{(\omega )}$, the sets $\Theta _{\mathcal B}(f)$, $\Sigma
_{\mathcal B}(f)$ and $\WF _{\mathcal B}(f)$ with respect to the modulation space
$\mathcal B$ are defined in the same way as in Section
\ref{sec2}, after replacing the semi-norms of Fourier Lebesgue types in
\eqref{skoff1}  with the semi-norms in \eqref{modseminorm}.

\par

\begin{proposition}\label{nezavisnost}
Let there be given a sequence of positive numbers
$ (M_p)_{p \in \mN_0} $ which satisfies $ (M.1) - (M.3)'$, and let $M(\rho)$, $\rho>0$, be its associated function.
If $f\in (\mathcal{D}^{(M_p)})' (\mathbb{R}^d) $
then
$\WF  _{M^{p,q}_{(\omega )}}(f)$ is independent of $p$ and
$\phi \in \mathcal{S}^{(M_p)}(\mathbb{R}^d)\setminus 0$ in \eqref{modseminorm} .
\end{proposition}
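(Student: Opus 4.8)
The plan is to fix the localizing cutoff $\fy\in\cD^{(M_p)}(X)$ with $\fy(x_0)\neq 0$ and to show that the conical set $\Sigma_{M^{p,q}_{(\omega)}}(g)$ attached to $g:=\fy f$ changes with neither $p$ nor the window $\phi$. Since $g\in(\mathcal E^{(M_p)})'(\rr d)$ has compact support, Proposition~\ref{stftestimates}(2) is available: choosing $\phi\in\cD^{(M_p)}(\rr d)\setminus 0$ gives $\supp V_\phi g\subseteq K\times\rr d$ for a fixed compact $K$, together with the crude bound $|V_\phi g(x,\xi)|\lesssim e^{\ep M(|\xi|)}$ for some $\ep>0$. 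The engine of the argument is the change--of--window inequality (cf. \cite{Gro-book}): for nonzero windows there is $C>0$ with $|V_{\phi_2}g(x,\xi)|\le C\,(|V_{\phi_1}g|\ast|V_{\phi_2}\phi_1|)(x,\xi)$, the convolution on $\rr{2d}$, combined with the fact that $V_{\phi_2}\phi_1\in\cS^{(M_p)}(\rr{2d})$ by Theorem~\ref{nec-suf-cond}, so that $|V_{\phi_2}\phi_1(x,\xi)|\lesssim e^{-\lambda(M(|x|)+M(|\xi|))}$ for every $\lambda>0$. I would treat window--independence and $p$--independence separately, in both cases reusing the cone splitting from the proof of Theorem~\ref{wavefrontprop11}.

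For window--independence (with $p,q$ fixed) I would copy the scheme of Theorem~\ref{wavefrontprop11}. Starting from $\xi_0$ where $|g|_{M^{p,q}_{(\omega)}(\phi_1,\Gamma_1)}<\infty$, I pick open cones with $\overline{\Gamma_2}\subseteq\Gamma_1$ and $\xi_0\in\Gamma_2$, insert the change--of--window inequality into \eqref{modseminorm}, and split the inner $\eta$--integral into $\eta\in\Gamma_1$ and $\eta\notin\Gamma_1$. In the first term the $M(\rho)$--moderateness of $\omega$ gives $\omega(x,\xi)\lesssim e^{M(|x-y|)+M(|\xi-\eta|)}\omega(y,\eta)$ (subadditivity \eqref{konveksiti}); this exponential factor is absorbed by the decay of $V_{\phi_2}\phi_1$, and Young's inequality in $x$ with exponent $1$ followed by Young's inequality in $\xi$ bounds the term by $|g|_{M^{p,q}_{(\omega)}(\phi_1,\Gamma_1)}$. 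In the second term I use the geometric estimate \eqref{estimate1}, $|\xi-\eta|\gtrsim|\xi|+|\eta|$ for $\xi\in\Gamma_2,\ \eta\notin\Gamma_1$, so that the rapid decay of $V_{\phi_2}\phi_1$ together with the crude bound \eqref{rastiopadanje2} and Lemma~\ref{osobineasocirane} renders the remainder finite. This yields one inclusion of the corresponding $\Theta$--sets, and by symmetry their equality; passing to complements gives independence of $\phi$.

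For $p$--independence I fix the window (by the previous step I may take $\phi\in\cD^{(M_p)}$, so $V_\phi g$ is supported in $K\times\rr d$) and prove the two inclusions by different means. If $p_1\le p_2$, Hölder's inequality on the finite--measure set $K$ gives, for each $\xi$, $\nm{V_\phi g(\cdot,\xi)\omega(\cdot,\xi)}{L^{p_1}}\le|K|^{1/p_1-1/p_2}\nm{V_\phi g(\cdot,\xi)\omega(\cdot,\xi)}{L^{p_2}}$, so integration in $\xi$ over any cone shows that finiteness at $p_2$ forces finiteness at $p_1$. For the reverse implication I invoke the change of window with an auxiliary $\psi\in\cD^{(M_p)}$ and apply Young's inequality in $x$ with the exponent $r$ determined by $1/r=1+1/p_2-1/p_1$, which lies in $[0,1]$ precisely because $p_1\le p_2$, so that Young applies and $\nm{V_\phi\psi(\cdot,\xi-\eta)}{L^r}\lesssim e^{-\lambda M(|\xi-\eta|)}$; the same cone splitting as above then controls $|g|_{M^{p_2,q}_{(\omega)}(\phi,\Gamma_2)}$ by $|g|_{M^{p_1,q}_{(\omega)}(\psi,\Gamma_1)}$ plus a finite remainder, the latter being finite after using window--independence. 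Combining the two directions gives equality of the $\Sigma$--sets for all $p$, and together with window--independence this proves that $\WF_{M^{p,q}_{(\omega)}}(f)$ depends on neither $p$ nor $\phi$.

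The main obstacle is the second direction of the $p$--independence: there one must \emph{raise} the Lebesgue exponent from $p_1$ to $p_2$, which is impossible by Hölder alone on $K$ and forces the use of the smoothing built into the change--of--window convolution. The delicate point is the bookkeeping that makes Young's exponent constraint $1/r=1+1/p_2-1/p_1$ compatible with the integrability (indeed rapid decay) of the window kernel $V_\phi\psi$, while the weight $\omega$ is simultaneously transferred by moderateness and the off--cone contribution is absorbed by \eqref{estimate1}; all the other steps are routine adaptations of the Fourier--Lebesgue argument in Theorem~\ref{wavefrontprop11}.
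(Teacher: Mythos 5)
Your proposal is correct, and it splits into two parts of different character relative to the paper. For window-independence you follow essentially the paper's own route: the change-of-window inequality of \cite[Lemma 11.3.3]{Gro-book} with kernel $G=|V_{\phi}\phi_1|e^{M(|\cdot|)}$, the splitting of the convolution according to $\eta \in \Gamma_1$ versus $\eta \notin \Gamma_1$ for cones with $\overline{\Gamma_2}\subseteq \Gamma_1$, Young's inequality for the in-cone term, and Proposition \ref{stftestimates} together with Lemma \ref{osobineasocirane} and the separation estimate \eqref{estimate1} for the off-cone term; this is precisely the paper's derivation of \eqref{est2.6}. For $p$-independence, however, you supply something the paper's written proof does not: that proof stops after window-independence, and the independence of $p$ is in effect obtained only later, as a by-product of Theorem \ref{wavefrontsequal} (the identity $\WF_{\mathcal FL^q_{(\omega)}}(f)=\WF_{M^{p,q}_{(\omega)}}(f)$, whose left-hand side contains no $p$ at all), with the underlying computation deferred to \cite[Theorem 3.1]{JPTT2}. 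Your direct two-sided argument --- H\"older's inequality on the compact $x$-support $K$ (available after reducing, via the already-proved window-independence, to a window $\phi \in \cD^{(M_p)}(\mathbb{R}^d)$ and invoking Proposition \ref{stftestimates}(2)) to lower the exponent, and change-of-window plus mixed-norm Young with $1/r=1+1/p_2-1/p_1 \in [0,1]$ to raise it --- is a legitimate, self-contained alternative, and it correctly identifies the key obstruction: raising the Lebesgue exponent cannot come from H\"older and must exploit the smoothing of the convolution against the rapidly decaying kernel $V_\phi \psi$. What each approach buys: yours makes Proposition \ref{nezavisnost} self-contained and independent of the Fourier-Lebesgue comparison; the paper's is more economical, since the equivalence of Theorem \ref{wavefrontsequal} has to be established anyway and then delivers $p$-independence for free.
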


\begin{proof}
We may assume that $f\in (\mathcal E ^{ (M_p) })' (\mathbb{R}^d)$
and that $\omega (x,\xi )=\omega (\xi )$ since the statements only concern local assertions.

\par

We follow the idea of the proof of \cite[Theorem 3,1]{JPTT2}, and
in order to prove that $\WF _{M^{p,q}_{(\omega )}}(f)$ is independent of
$\phi \in \mathcal{S}^{(M_p)}(\mathbb{R}^d)\setminus 0$, we assume that $\phi ,\phi _1\in
 \mathcal{S}^{(M_p)}(\mathbb{R}^d)\setminus 0$ and let $|\cdo |_{\mathcal C_1(\Gamma )}$ be
the semi-norm in \eqref{modseminorm} after $\phi$ has been replaced by
$\phi _1$. Let $\Gamma _1$ and $\Gamma _2$ be open cones in $\mathbb{R}^d$
such that $\overline {\Gamma _2}\subseteq \Gamma _1$. The asserted
independency of $\phi$ follows if we prove that
\begin{equation}\label{est2.6}
|f|_{\mathcal C(\Gamma _2)} \le C(|f|_{\mathcal C_1(\Gamma _1)}+1),
\end{equation}
for some positive constant $C$.
Let
\[
\Omega _1=\sets {(x,\xi )}{\xi \in \Gamma _1}\subseteq \mathbb{R}^{2d}\quad
\text{and}\quad \Omega
_2=\complement \Omega _1\subseteq \mathbb{R}^{2d},
\]
with characteristic functions $\chi _1$ and $\chi _2$ respectively,
and set
$$
F_k(x,\xi )=|V_{\phi _1}f(x,\xi )|\omega (\xi )\chi _k(x,\xi ), \quad
k=1,2,
$$ and $  G=|V_\phi \phi _1(x,\xi )e^{M(|\xi| )}|$.
Since $\omega$ is $v$-moderate, it follows from \cite[Lemma 11.3.3]{Gro-book} that
\[
|V_\phi f(x,\xi )\omega (x,\xi )|\lesssim \big ( (F_1+F_2)*G\big )(x,\xi),
\]
which implies that
\begin{equation*}
|f|_{\mathcal C(\Gamma _2)} \lesssim J_1+J_2,
\end{equation*}
where
\[
J_k = \Big (\int _{\Gamma _2} \Big (\int_{\mathbb{R}^d} |(F_k*G)(x,\xi )|^p\, dx\Big
)^{q/p}\, d\xi \Big )^{1/q}, \quad k=1,2.
\]

\par

By Young's inequality
\begin{equation*}
J_1\le \nm {F_1*G}{L^{p,q}_1}\le \nm G{L^1}\nm {F_1}{L^{p,q}_1}
=C|f|_{\mathcal C_1(\Gamma _1)},
\end{equation*}
where $C=\nm G{L^1} = \nm {V_\phi \phi _1(x,\xi )e^{M(|\xi| )}}{L^1}<\infty$, in view of Proposition \ref{stftestimates}.

\par

Next we consider $J_2$.
For $\xi\in \Gamma_2$ fixed and integrating over $\eta\in \complement \Gamma_1$,
it follows from  Propositon \ref{stftestimates} and Lemma \ref{osobineasocirane}
that for some $\ep >0$ and every $N,h>0$ we have that
$ |(F_2 * G)(x,\xi )| $ is bounded by
$$  C \iint _{\mathbb{R}^{2d}}
e^{-N M(|y|)} e^{\ep M( | \eta |)}  e^{- h (M(|x-y|) + M(|\xi - \eta|))} e^{M(|\xi-\eta|)} \, dy d\eta,
$$
for some constant $C>0.$
Therefore, there exist a constant $c>0$ such that
\begin{multline*}
|(F_2 * G)(x,\xi )|
\\[1ex]
\lesssim \iint _{\mathbb{R}^{2d}}
e^{-N M(|y|)} e^{\ep M(| \eta |)}
e^{- hM(|x-y|)- hc(M(|\xi|) + M(| \eta|))}
e^{(M(|\xi|) + M(|\eta|) )} \, dy d\eta
\\[1 ex]
\lesssim   e^{ (-N+h) M(|x|)}  e^{(1 -hc)M(|\xi|)}
\iint _{\mathbb{R}^{2d}}  e^{-h M(|y|)}
e^{(1+\ep  -hc)M(|\eta|) } \, dy d\eta,
\\[1 ex]
\lesssim    e^{ (-N+h) M(|x|)}  e^{(1 -hc)M(|\xi|)} < \infty ,
\end{multline*}
since $N>0$ and $h$ can be chosen arbitrarily.
Therefore
\begin{multline*}
J_2  = \Big (\int _{\Gamma _2} \Big (\int_{\mathbb{R}^d} |(F_2 * G)(x,\xi )|^p\, dx\Big )^{q/p}\, d\xi \Big )^{1/q}
\\[1 ex]
\lesssim
 \Big (\int _{\Gamma _2} \Big (\int_{\mathbb{R}^d}\Big (
 e^{ (-N+h) M(|x|)}  e^{(1 -hc) M(|\xi|)}
 \Big)^p\, dx\Big )^{q/p}\, d\xi \Big )^{1/q}<\infty.
\end{multline*}
This proves that \eqref{est2.6}, and hence $\WF _{\mathcal C}(f)$ is
independent of $\phi \in \mathcal{S}^{(s)}(\mathbb{R}^d)\setminus 0$.
\end{proof}

\par

The main result of this section,  Theorem \ref{wavefrontsequal}, now follows from Proposition \ref{nezavisnost}
and calculations given in the proof of \cite[Theorem 3.1]{JPTT2}. For that reason we omit the proof.

\begin{theorem}\label{wavefrontsequal}
Let there be given a sequence of positive numbers
$ (M_p)_{p \in \mN_0} $ which satisfies $ (M.1) - (M.3)'$, and let $M(\rho)$, $\rho>0$ be its associated function.
Let  $p,q\in [1,\infty ]$ and  $\omega \in \mathcal{M} _{M(\rho)} (\mathbb{R}^{2d})$.
If $f\in (\mathcal{D}^{(M_p)})' (\mathbb{R}^d) $
then
\begin{equation}\label{WFidentities2}
\WF _{\mathcal FL^q_{(\omega)}}(f)= \WF _{M^{p,q}_{(\omega )}}(f).
\end{equation}
\end{theorem}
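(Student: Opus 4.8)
The plan is to prove the identity \eqref{WFidentities2} by comparing the conic ``spectral'' sets $\Theta$ and $\Sigma$ that underlie the two wave-front sets. Since both $\WF_{\mathcal FL^q_{(\omega)}}$ and $\WF_{M^{p,q}_{(\omega)}}$ are defined through cutoffs $\fy\in\mathcal D^{(M_p)}(\mathbb R^d)$ and involve only local assertions, I would first reduce to the case $f\in(\mathcal E^{(M_p)})'(\mathbb R^d)$ with $\omega(x,\xi)=\omega(\xi)$ independent of $x$, exactly as in the proofs of Theorem \ref{wavefrontprop11} and Proposition \ref{nezavisnost}. It then suffices, for a fixed window $\phi\in\mathcal S^{(M_p)}(\mathbb R^d)\setminus 0$, to show that the two families of seminorms determine the same set $\Theta$, i.e. that $|f|_{\mathcal FL^q_{(\omega)}(\Gamma)}<\infty$ on some cone around $\xi_0$ precisely when $|f|_{M^{p,q}_{(\omega)}(\Gamma')}<\infty$ on some cone around $\xi_0$. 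The decisive simplification is Proposition \ref{nezavisnost}: since $\WF_{M^{p,q}_{(\omega)}}(f)$ does not depend on $p$, I am free to take $p=1$ for one inclusion and $p=\infty$ for the other, and to choose $\phi$ conveniently.

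For the inclusion $\Theta_{M}\subseteq\Theta_{\mathcal FL}$ I would work with $p=1$ and a window normalized so that $\int\phi\neq 0$. Writing $V_\phi f(x,\xi)$ as the Fourier transform of $f\cdot\overline{T_x\phi}$, integrating in $x$, and interchanging the order of integration yields the reproducing identity $\int_{\mathbb R^d}V_\phi f(x,\xi)\,dx=c\,\widehat f(\xi)$ with $c\neq0$, the inner $x$-integral converging because $|V_\phi f(x,\xi)|\lesssim e^{-hM(|x|)}e^{\ep M(|\xi|)}$ by Proposition \ref{stftestimates}(1). Consequently, on any cone $\Gamma$,
\[
|f|_{\mathcal FL^q_{(\omega)}(\Gamma)}=|c|^{-1}\Big(\int_\Gamma\Big|\int_{\mathbb R^d}V_\phi f(x,\xi)\,dx\Big|^q\omega(\xi)^q\,d\xi\Big)^{1/q}\lesssim |f|_{M^{1,q}_{(\omega)}(\Gamma)},
\]
so finiteness of the modulation seminorm on a cone forces finiteness of the Fourier--Lebesgue seminorm on the \emph{same} cone, which gives this inclusion immediately.

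For the reverse inclusion $\Theta_{\mathcal FL}\subseteq\Theta_M$ I would use $p=\infty$ together with the elementary pointwise bound $\sup_x|V_\phi f(x,\xi)|\le\int_{\mathbb R^d}|\widehat f(\eta)|\,|\widehat\phi(\eta-\xi)|\,d\eta$, obtained by taking absolute values in the integral above; crucially, the right-hand side no longer depends on $x$. Setting $F(\xi)=|\widehat f(\xi)\omega(\xi)|$ and $\psi(\xi)=|\widehat\phi(-\xi)|e^{M(|\xi|)}$ and using that $\omega$ is $M(\rho)$-moderate, this reduces the estimate of $|f|_{M^{\infty,q}_{(\omega)}(\Gamma_2)}$ to exactly the convolution estimate already carried out for Theorem \ref{wavefrontprop11}: one chooses cones $\overline{\Gamma_2}\subseteq\Gamma_1$ around $\xi_0$ and splits the $\eta$-integral into $\Gamma_1$ and $\complement\Gamma_1$. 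On $\Gamma_1$, Young's inequality bounds the contribution by $\nm{\psi}{L^1}\,|f|_{\mathcal FL^q_{(\omega)}(\Gamma_1)}$, finite by the Paley--Wiener decay $|\widehat\phi(\xi)|\lesssim e^{-(N+1)M(|\xi|)}$ from Theorem \ref{Paley-Wiener theorem} and Remark \ref{observation}. On $\complement\Gamma_1$ I would combine the geometric estimate \eqref{estimate1}, the subadditivity \eqref{konveksiti} of $M$, the rapid decay of $\psi$, and the a priori bound $F(\eta)\lesssim e^{N_0M(|\eta|)}$ coming from $f\in(\mathcal E^{(M_p)})'$, to gain an arbitrarily large factor $e^{-N_1M(|\eta|)}$ and render this term finite. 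This yields $|f|_{M^{\infty,q}_{(\omega)}(\Gamma_2)}<\infty$, hence $\xi_0\in\Theta_M$.

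The two inclusions give $\Theta_{\mathcal FL}=\Theta_M$ after each cutoff, and therefore the equality \eqref{WFidentities2}. The genuine obstacle is the mismatch between the pure frequency integration defining the Fourier--Lebesgue seminorm and the phase-space integration in $x$ and $\xi$ defining the modulation seminorm; the entire argument rests on decoupling these, which is made possible precisely by the $p$-independence of the modulation wave-front set (Proposition \ref{nezavisnost}) together with the sharp time decay $e^{-hM(|x|)}$ of $V_\phi f$ for compactly supported $f$ (Proposition \ref{stftestimates}). Once $p$ may be specialized to $1$ and to $\infty$, the two directions become, respectively, a one-line reproducing-formula estimate and a near-verbatim repetition of the cone-splitting argument already established for Theorem \ref{wavefrontprop11}.
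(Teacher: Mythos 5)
Your proposal is correct and takes essentially the same route as the paper, which omits the proof and refers instead to Proposition \ref{nezavisnost} together with the calculations in the proof of \cite[Theorem 3.1]{JPTT2}. Your two seminorm estimates --- $|f|_{\mathcal FL^q_{(\omega)}(\Gamma)}\lesssim|f|_{M^{1,q}_{(\omega)}(\Gamma)}$ via the reproducing identity $\int V_\phi f(x,\xi)\,dx=c\,\widehat f(\xi)$, and $|f|_{M^{\infty,q}_{(\omega)}(\Gamma_2)}\lesssim|f|_{\mathcal FL^q_{(\omega)}(\Gamma_1)}+C$ via the $x$-uniform convolution bound and the cone-splitting argument of Theorem \ref{wavefrontprop11} --- are precisely the calculations being invoked, with Proposition \ref{nezavisnost} handling the $p$- and window-independence exactly as intended.
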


\par

Finally, note that for a  given  sequence of positive numbers
$ (M_p)_{p \in \mN_0} $ which satisfies $ (M.1) - (M.3)'$, and its associated function $M(\rho)$, $\rho>0$,
when $p,q\in [1,\infty ]$,  $\omega \in \mathcal{M} _{M(\rho)} (\mathbb{R}^{2d})$ and
$f\in  (\mathcal{E}^{ (M_p)})' (\mathbb{R}^d)$, then it follows from the definition of wave-front sets that
then
\[
f \in \mathcal B \quad
\Longleftrightarrow \quad
\WF _{\mathcal B }(f) =\emptyset,
\]
when $\mathcal B$ is equal to $\mathcal FL^{q}_{(\omega)}$ or
$M^{p,q}_{(\omega )}$. In particular, by Theorem \ref{wavefrontsequal} we obtain
\[\mathcal FL^q _{(\omega)} \cap (\mathcal E^{(M_p)})'(\mathbb{R}^d)=\Mopq \cap (\mathcal E^{(M_p)})'(\mathbb{R}^d),\]
and we recover Corollary 6.2 in \cite{PTT}, Theorem 2.1 and Remark 4.6 in \cite{RSTT}.

\section*{Acknowledgement}

This work is supported by MPNTR through Project 174024 and DS028 -- Tifmofus.

\bibliographystyle{amsplain}

\end{document}